\newcommand{\aki}{\hspace{2mm}}
\newtheorem{theo}{Theorem}[section]
\newtheorem{lemm}[theo]{Lemma}
\newtheorem{prop}[theo]{Proposition}
\newtheorem{remk}[theo]{Remark}
\title{The Beilinson Conjectures for CM Elliptic Curves via Hypergeometric Functions}
\author{Ryojun Ito\thanks{
Department of Mathematics and Informatics, Graduate School of Science, Chiba University, 
Yayoicho 1-33, Inage, Chiba, 263-8522 Japan. E-mail: afua9032@chiba-u.jp,  
2010 Mathematics Subject Classification: 11G05, 11S40, 19F27, 33C20.     
keywords: $L$-value of elliptic curve, regulator, Beilinson conjectures, hypergeometric function. }}
\begin{document}

\maketitle

\begin{abstract}
We consider certain CM elliptic curves which are related to Fermat curves, and express the values of $L$-functions at $s=2$ 
in terms of special values of generalized hypergeometric functions. 
We compare them and a similar result of Rogers-Zudilin with Otsubo's regulator formulas, 
and give a new proof of the Beilinson conjectures originally due to Bloch.
\end{abstract}

\section{Introduction}
The Beilinson conjectures \cite{bei1,bei2} are some very general statements extending
the class number formula which relates the values of $L$-functions at integers to regulators. 
For an elliptic curve $E$ over $\mathbb{Q}$, the conjecture concerning $L(E, 2)$ is originally due to Bloch \cite{blo1,blo2} 
and was proved by himself when $E$ has complex multiplication. 
The non-CM case follows from Beilinson's work on modular curves \cite{bei2} and the modularity of $E$ due to Wiles. 
The regulator map that we consider is given by
\begin{equation*}
r_{\mathscr{D}}: \aki H^{2}_{\mathscr{M}}(E, \mathbb{Q}(2))_{\mathbb{Z}} \longrightarrow 
H^{2}_{\mathscr{D}}(E_{\mathbb{R}}, \mathbb{R}(2)) 
\end{equation*} 
from the motivic cohomology to the Deligne cohomology (see Section \ref{regulator}).

Let $E_{N}$ be an elliptic curve over $\mathbb{Q}$ of conductor $N$. In this paper, we treat the cases $N=$ 27, 32 and 64, i.e. 
\begin{align*}
E_{27} : y^{2} &= x^{3}- \frac{27}{4}, \\
E_{32} : y^{2} &= x^{3}+ 4x, \\
E_{64} : y^{2} &= x^{3}- 4x .
\end{align*}
Note that $E_{27}$ is isogenous to the Fermat curve of degree $3$ 
and has complex multiplication by $\mathbb{Z}[(-1+\sqrt{-3})/2]$, 
and each of  $E_{32}$ and $E_{64}$ is a quotient of the Fermat curve of degree $4$ and has
complex multiplication by $\mathbb{Z}[\sqrt{-1}]$.

For Fermat curves, Ross \cite{ross1,ross2} constructed an element of the motivic cohomology group. 
Otsubo \cite{otsubo2,otsubo} expressed its regulator image in terms of special values of generalized hypergeometric functions $_{3}F_{2}$
\begin{align*}
{_{3}F_{2}}\left[ 
\left.
\begin{matrix}
a,b,c \\
e,f 
\end{matrix}
\right| z
\right]
&:=
\sum_{n=0}^{\infty} \frac{(a)_{n} (b)_{n} (c)_{n}}{(e)_{n} (f)_{n}}\frac{z^{n}}{n!} 
\end{align*}
where $(a)_{n} := \Gamma(a+n)/\Gamma(a)$ denotes the Pochhammer symbol 
(see Theorems \ref{n=3noreg}, \ref{n=4noregfor} and \ref{n=4noregfor2}).

By comparing the regulator image of Bloch's element with that of Ross' element, Otsubo \cite{otsubo} expressed 
the values $L^{\prime}(E_{27}, 0)$ and $L^{\prime}(E_{32}, 0)$ in terms of values of $_{3}F_{2}$ at $z=1$. Note that we have the 
functional equation (cf.\cite{d.w.})
\begin{equation}
L^{\prime}(E_{N}, 0) = \pm\frac{N}{(2\pi)^{2}} L(E_{N}, 2).  \label{functional}
\end{equation} 
On the other hand, in \cite{r.z}, 
Rogers and Zudilin expressed the value $L(E_{27}, 2)$ in terms of values of $_{3}F_{2}$ at $z=1$ directly by an analytic 
method (see Theorem \ref{n=3nol}).

The first purpose of this paper is to prove the following formulas. \\
{\bf Theorem} (Theorems \ref{n=4nol} and \ref{cond64})
\begin{align*}
L(E_{32}, 2) &= 
\frac{\sqrt{\pi}\Gamma^{2}\left( \frac{1}{4}\right)}{32\sqrt{2}}
{_{3}F_{2}}\left[ 
\left.
\begin{matrix}
\frac{1}{2}, \frac{1}{2}, 1 \\
\frac{3}{2}, \frac{3}{4}
\end{matrix}
\right| 1
\right]
-
\frac{\sqrt{\pi}\Gamma^{2}\left( \frac{3}{4}\right)}{8\sqrt{2}}
{_{3}F_{2}}\left[
\left.
\begin{matrix}
\frac{1}{2}, \frac{1}{2}, 1 \\
\frac{3}{2}, \frac{5}{4}
\end{matrix}
\right| 1
\right] , \\
L(E_{64}, 2) &= 
\frac{\sqrt{\pi}\Gamma^{2}\left( \frac{1}{4}\right)}{32}
{_{3}F_{2}}\left[ 
\left.
\begin{matrix}
\frac{1}{4}, \frac{1}{4}, 1 \\
\frac{1}{2}, \frac{5}{4}
\end{matrix}
\right| 1
\right]
-
\frac{\sqrt{\pi}\Gamma^{2}\left( \frac{3}{4}\right)}{48}
{_{3}F_{2}}\left[ 
\left.
\begin{matrix}
\frac{3}{4}, \frac{3}{4}, 1 \\
\frac{3}{2}, \frac{7}{4}
\end{matrix}
\right| 1
\right] .
\end{align*}

Rogers \cite[p.4036, (46)]{rog1} expressed the value $L(E_{32},2)$ in terms of a value of $_{3}F_{2}$ at $z = 1/2$. 
Zudilin \cite[p.391, Theorem 3]{zud2} expressed $L(E_{32}, 2)$ as a sum of values of $_{3}F_{2}$ at $z=1$.  
Our new representation of the value $L(E_{32}, 2)$, however, is appropriate for the comparison with regulators.

To prove the formulas above, we follow an analogous method to that of Rogers and Zudilin \cite{r.z}. 
The modularity theorem shows that the $L$-function of an elliptic curve is equal to the Mellin transform of a weight-two modular form. 
We know that the modular form corresponding to $E_{32}$ (resp. $E_{64}$) 
is $\eta^{2}(q^{4})\eta^{2}(q^{8})$ (resp. $\frac{\eta^{8}(q^{8})}{\eta^{2}(q^{4})\eta^{2}(q^{16})}$) (cf.\cite{onoken}), 
where $\eta (q)$ is the Dedekind eta function. Hence we have
\begin{align}
L(E_{32}, 2) &= - \int_{0}^{1} \eta^{2}(q^{4})\eta^{2}(q^{8})\log q \frac{dq}{q},  \label{e32}\\
L(E_{64}, 2) &= - \int_{0}^{1} \frac{\eta^{8}(q^{8})}{\eta^{2}(q^{4})\eta^{2}(q^{16})} \log q \frac{dq}{q} \label{e64}.
\end{align}
By Jacobi's triple product formula and Jacobi's imaginary transformation formula, 
each integral can be expressed as an integral of a product of Jacobi's theta functions. 
Then certain transformation reduces each of $L(E_{32}, 2)$ and $L(E_{64}, 2)$ to an integral of elementary functions.

The second purpose of this paper is to compare the regulators with the values $L^{\prime}(E_{N}, 0)$ 
via hypergeometric functions. 
Let $e_{E_{N}} \in H^{2}_{\mathscr{M}}(E_{N}, \mathbb{Q}(2))_{\mathbb{Z}}$ be an element which is constructed by mapping Ross' element, 
$\omega_{E_{N}}$ be the normalized real holomorphic differential form on $E_{N}$ 
and $\Omega_{\mathbb{R}}$ be its real period (see Section \ref{regulator}).
By comparing the representations of the regulators due to Otsubo \cite{otsubo}  
with the representations of the values of $L$-functions at $s=2$ explained above, 
we prove the Beilinson conjectures for $E_{27}$, $E_{32}$ and $E_{64}$. \\
{\bf Theorem} (Theorems \ref{comp1}, \ref{comp2} and \ref{comp3})
\begin{align*}
r_{\mathscr{D}}(e_{E_{27}}) 
&= - \frac{3}{2}L^{\prime}(E_{27}, 0) \Omega_{\mathbb{R}}(\omega_{E_{27}} - \overline{\omega_{E_{27}}}), \\
r_{\mathscr{D}}(e_{E_{32}}) 
&= - \frac{1}{2} L^{\prime}(E_{32},0)\Omega_{\mathbb{R}}(\omega_{E_{32}} - \overline{\omega_{E_{32}}}), \\
r_{\mathscr{D}}(e_{E_{64}}) 
&= - \frac{1}{2} L^{\prime}(E_{64},0)\Omega_{\mathbb{R}}(\omega_{E_{64}} - \overline{\omega_{E_{64}}}) .
\end{align*}
Note that, for $E_{27}$ and $E_{32}$, Otsubo \cite[Propositions 5.1 and 5.4]{otsubo} compared $r_{\mathscr{D}}(e_{E_{N}})$ with
the regulator image of Bloch's element. Hence the first two formulas can also be obtained by Bloch's result.
On the other hand, for $E_{64}$, $r_{\mathscr{D}}(e_{E_{64}})$ 
and the regulator image of Bloch's element has not been compared. 
Our result gives a rigorous proof of the formula which was found numerically in \cite{otsubo3}.

By Martin and Ono \cite{onoken}, those CM elliptic curves whose corresponding cusp form is an eta quotient 
are $E_{27}$, $E_{32}$, $E_{36}$, $E_{64}$ and $E_{144}$. The remaining cases are  
\begin{align*}
E_{36} : \aki y^{2} &= x^{3} + 1, \\
E_{144} : \aki y^{2} &= x^{3} - 1. 
\end{align*}
Note that each of $E_{36}$ and $E_{144}$ is a quotient of the Fermat curve of degree $6$.
Otsubo \cite{otsubo} expressed $r_{\mathscr{D}}(e_{E_{36}})$ and $r_{\mathscr{D}}(e_{E_{144}})$ in terms of values of $_{3}F_{2}$ at 
$z=1$. 
Hence a similar study of $L(E_{36}, 2)$ and $L(E_{144}, 2)$ would lead to a proof of the Beilinson conjectures. 
Further, for more general CM abelian varieties, one might be able to approach the Beilinson conjectures 
via hypergeometric functions.

The structure of this paper is as follows. 
In Section \ref{l-formulas}, we express the values $L(E_{32}, 2)$ and $L(E_{64}, 2)$ in terms of values of $_{3}F_{2}$ at $z=1$. 
Their proofs are analogous to the method of Rogers and Zudilin.
In Section \ref{comparison}, we stablish the relationship between $L^{\prime}(E_{27}, 0)$, $L^{\prime}(E_{32}, 0)$ 
and $L^{\prime}(E_{64}, 0)$ and the regulators via hypergeometric functions and in this way prove our main results.

\section{$L$-values}\label{l-formulas}
In this section, we express the values $L(E_{32}, 2)$ and $L(E_{64}, 2)$ in terms of values of hypergeometric functions.

\subsection{Conductor 32}\label{conductor 32}
First we prove the following formula.
\begin{prop}\label{thetaprod}
\begin{equation*}
L(E_{32}, 2) =
\frac{\pi}{32}\int_{0}^{1}
\theta_{2}(q)\theta_{3}(q)\left(\theta_{3}^{2}(q) - \theta_{2}^{2}(q) \right)
\log \left( \frac{\theta_{3}(q^{2})}{\theta_{2}(q^{2})}\right)  \frac{dq}{q}.
\end{equation*}
\end{prop}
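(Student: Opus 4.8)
The plan is to follow the analytic method of Rogers and Zudilin: convert the eta quotient in \eqref{e32} into Jacobi theta functions, and then trade the factor $\log q$ for the theta-logarithm $\log(\theta_3(q^2)/\theta_2(q^2))$ by means of Jacobi's imaginary transformation. First I would apply Jacobi's triple product formula. The resulting product expansions give the eta--theta relation $\eta^2(q^4)\eta^2(q^8)=\tfrac12\,\theta_2(q^4)\theta_3(q^4)\theta_4^2(q^4)$ (this follows from $\theta_2\theta_3\theta_4=2\eta^3(q^2)$ together with $\theta_4=\eta^2/\eta(q^2)$, read at the argument $q^4$). Substituting into \eqref{e32} and rescaling $q^4\mapsto q$, which introduces a factor $\tfrac1{16}$, gives
\[
L(E_{32},2)=-\frac1{32}\int_0^1\theta_2(q)\theta_3(q)\theta_4^2(q)\,\log q\,\frac{dq}{q}.
\]
Since the asserted integrand already carries the common factor $\theta_2(q)\theta_3(q)$, what remains is to match the pair $\bigl(\theta_4^2(q),\,\log q\bigr)$ against $\bigl(\theta_3^2(q)-\theta_2^2(q),\,\log(\theta_3(q^2)/\theta_2(q^2))\bigr)$.

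The key device is the logarithmic-derivative identity $q\frac{d}{dq}\log\frac{\theta_3(q^2)}{\theta_2(q^2)}=-\frac12\theta_3^2(q)\theta_4^2(q)$, which one checks from the $q$-expansions, or from the descending Landen relation $\theta_2^2(q^2)/\theta_3^2(q^2)=(1-k')/(1+k')$ for the modulus $k=\theta_2^2(q)/\theta_3^2(q)$. Integrating the right-hand integral of the proposition by parts --- using $q\frac{d}{dq}\bigl(4\arctan\tfrac{\theta_2}{\theta_3}\bigr)=\theta_2\theta_3(\theta_3^2-\theta_2^2)$ and that the boundary terms vanish at the two cusps $q=0,1$ --- reduces it to $2\int_0^1\arctan\tfrac{\theta_2}{\theta_3}\,\theta_3^2(q)\theta_4^2(q)\,\frac{dq}{q}$. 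The cleanest way to organise the comparison is to pass everywhere to the modulus via $\theta_3^2=2K/\pi$, $k=\theta_2^2/\theta_3^2$, $\log q=-\pi K'/K$ and $\frac{dq}{q}=\frac{\pi^2}{2kk'^2K^2}dk$; both sides then become elementary integrals,
\[
L(E_{32},2)=\frac{\pi}{16}\int_0^1\frac{K'}{\sqrt{k}\,k'K}\,dk,\qquad \text{RHS}=\frac{\pi}{32}\int_0^1\frac{1}{\sqrt{k}\,(1+k)}\log\frac{1+k'}{1-k'}\,dk,
\]
and the proposition is the identity between them. The factor $\pi$ in $\tfrac{\pi}{32}$ is exactly the constant carried by $\log q=-\pi K'/K$.

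The hard part will be this final identification, which is genuinely transcendental rather than formal. A single integration by parts does not connect the two integrals, because the theta product $\theta_2\theta_3\theta_4^2$ multiplying $\log q$ has no elementary $q$-primitive (its antiderivative is a lemniscatic integral $\int dk/\sqrt{k(1-k^2)}$). Instead one must push Jacobi's imaginary transformation $\tau\mapsto -1/\tau$ through the integral as a change of variables, tracking the modular weight factors $\sqrt{-i\tau}$, the interchange of $\theta_2$ and $\theta_4$, and the change of the nome's argument; one then checks that the two transcendental weights $\log q$ and $\log(\theta_3(q^2)/\theta_2(q^2))$ recombine with the correct constant, and that all boundary contributions at $q=0$ and $q=1$ (where $\theta_4$ and the theta-logarithm degenerate) converge. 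This bookkeeping is the technical heart of the argument, and is where I expect the real work to lie.
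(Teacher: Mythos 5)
Your preliminary reductions are correct: the identity $\eta^2(q^4)\eta^2(q^8)=\tfrac12\theta_2(q^4)\theta_3(q^4)\theta_4^2(q^4)$ is equivalent to the paper's Lemma~\ref{etatotheta} (via $2\theta_2(q^2)\theta_3(q^2)=\theta_2^2(q)$), the rescaling factor $\tfrac1{16}$ is right, the logarithmic-derivative identity $q\frac{d}{dq}\log\frac{\theta_3(q^2)}{\theta_2(q^2)}=-\tfrac12\theta_3^2(q)\theta_4^2(q)$ and the primitive $q\frac{d}{dq}\bigl(4\arctan\tfrac{\theta_2}{\theta_3}\bigr)=\theta_2\theta_3(\theta_3^2-\theta_2^2)$ both check out in the modulus $k=\theta_2^2/\theta_3^2$, and the two elementary integrals you arrive at are correct transcriptions of the two sides. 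But all of this only \emph{restates} the proposition as an equivalent identity between $\int_0^1\frac{K'}{\sqrt{k}\,k'K}\,dk$ and $\int_0^1\frac{\log\frac{1+k'}{1-k'}}{\sqrt{k}\,(1+k)}\,dk$; none of it proves that identity, and you say so yourself. The proof therefore has a genuine gap exactly at its heart: the mechanism that trades the weight $\log q=-\pi K'/K$ for the theta-quotient logarithm.

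Moreover, the route you sketch for closing that gap --- ``push Jacobi's imaginary transformation through the integral as a change of variables, tracking the modular weight factors'' --- is not the right mechanism and would not succeed. Applying $\tau\mapsto-1/\tau$ globally to $\int_0^\infty\theta_2^2(e^{-2\pi u})\,u\,\theta_4^2(e^{-4\pi u})\,du$ merely returns an integral of the same shape (swapping $\theta_2\leftrightarrow\theta_4$ and $K\leftrightarrow K'$); it cannot by itself produce the elementary logarithm on the right-hand side. What the paper actually does (Section~\ref{conductor 32}) is: expand $\theta_2^2(q)=4\sum_{n,k\ge1}\chi_{-4}(n)q^{n(k-1/2)}$, apply the imaginary transformation only to the second factor to write $u\theta_4^2(e^{-4\pi u})=\sum_{r,s\ge1}\chi_{-4}(r)e^{-\pi r(s-1/2)/(4u)}$, and then perform the \emph{index-dependent} substitution $u\mapsto ru/(k-1/2)$ term by term in the resulting four-fold sum. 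This is not a change of variables of the integral as a whole; it is the Rogers--Zudilin trick that refactors the four-fold sum into a product of two new double series, which Lemmas~\ref{seriescal1} and~\ref{seriescal2} then identify as $\tfrac12\log\frac{\theta_3(q^8)}{\theta_2(q^8)}$ and $\tfrac12\theta_2(q^4)\theta_3(q^4)\bigl(\theta_3^2(q^4)-\theta_2^2(q^4)\bigr)$ respectively. That refactorisation is the missing idea, and without it your argument stops one step short of the proposition.
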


We know that the modular form corresponding to $E_{32}$ is $\eta^{2}(q^{4})\eta^{2}(q^{8})$ (cf.\cite[p.3173, Theorem 2]{onoken}).
To show the formula above, we express this eta product as a product of Jacobi's theta functions.

\begin{lemm}\label{etatotheta}
\begin{equation*}
\eta^{2}(q^{4})\eta^{2}(q^{8}) = \frac{1}{4}\theta_{2}^{2}(q^{2})\theta_{4}^{2}(q^{4})
\end{equation*}
\end{lemm}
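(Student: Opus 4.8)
The plan is to express each Jacobi theta function on the right-hand side as a quotient of Dedekind eta functions and then reduce the asserted identity to a purely formal cancellation among eta factors. The two ingredients I would use are the classical product identities
\begin{equation*}
\theta_{2}(q) = \frac{2\eta^{2}(q^{4})}{\eta(q^{2})}, \qquad
\theta_{4}(q) = \frac{\eta^{2}(q)}{\eta(q^{2})},
\end{equation*}
both of which follow directly from the triple-product expansions $\theta_{2}(q) = 2q^{1/4}\prod_{n\ge 1}(1-q^{2n})(1+q^{2n})^{2}$ and $\theta_{4}(q) = \prod_{n\ge 1}(1-q^{2n})(1-q^{2n-1})^{2}$ together with $\eta(q) = q^{1/24}\prod_{n\ge1}(1-q^{n})$.

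Granting these, the computation is immediate. Substituting $q \mapsto q^{2}$ in the first identity and $q \mapsto q^{4}$ in the second gives
\begin{equation*}
\theta_{2}^{2}(q^{2}) = \frac{4\eta^{4}(q^{8})}{\eta^{2}(q^{4})}, \qquad
\theta_{4}^{2}(q^{4}) = \frac{\eta^{4}(q^{4})}{\eta^{2}(q^{8})}.
\end{equation*}
Multiplying these and dividing by $4$, the factors $\eta^{4}(q^{4})$ and $\eta^{4}(q^{8})$ in the numerators cancel against $\eta^{2}(q^{4})$ and $\eta^{2}(q^{8})$ in the denominators, leaving exactly $\eta^{2}(q^{4})\eta^{2}(q^{8})$, as claimed.

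The only genuine work lies in establishing the two auxiliary identities, and there the main point to watch is the bookkeeping of the fractional $q$-powers: one must check that the prefactor of $\theta_{2}$ and the exponents $q^{1/24}$ coming from each $\eta$ combine to reproduce the correct leading term on each side, namely $q^{1/4}$ for $\theta_{2}$ and $q^{0}$ for $\theta_{4}$. Rewriting $\prod_{n\ge1}(1-q^{n}) = \prod_{n\ge1}(1-q^{2n})(1-q^{2n-1})$ and $1+q^{2n} = (1-q^{4n})/(1-q^{2n})$ turns each verification into matching infinite products term by term, which is routine. An alternative, slightly more direct route would bypass the named theta functions entirely and compare the infinite-product expansions of $\frac14\theta_{2}^{2}(q^{2})\theta_{4}^{2}(q^{4})$ and $\eta^{2}(q^{4})\eta^{2}(q^{8})$ directly; this avoids invoking the auxiliary identities but requires the same power-of-$q$ accounting.
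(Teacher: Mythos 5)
Your proof is correct, and it reaches the conclusion by a somewhat more direct route than the paper. Both arguments ultimately reduce the identity to the same cancellation of eta factors, via $\theta_{2}^{2}(q^{2}) = 4\eta^{4}(q^{8})/\eta^{2}(q^{4})$ and $\theta_{4}^{2}(q^{4}) = \eta^{4}(q^{4})/\eta^{2}(q^{8})$; the difference is in how the first of these is obtained. You get it immediately from the product identity $\theta_{2}(q) = 2\eta^{2}(q^{4})/\eta(q^{2})$, verified by the routine power-of-$q$ bookkeeping you describe. The paper instead never writes down an eta quotient for $\theta_{2}$ itself: it combines Jacobi's formula $\eta^{3}(q^{8}) = \sum_{n\ge 0}(-1)^{n}(2n+1)q^{(2n+1)^{2}} = \tfrac{1}{2}\theta_{2}(q^{4})\theta_{3}(q^{4})\theta_{4}(q^{4})$ with the duplication identity $2\theta_{2}(q^{2})\theta_{3}(q^{2}) = \theta_{2}^{2}(q)$ and the eta quotient for $\theta_{4}$, which together yield the same expression for $\theta_{2}^{2}(q^{2})$. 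The paper's choice is natural in context because the identities \eqref{jacobifor1} and \eqref{jacobifor2} are reused later (in Lemma \ref{seriescal1} and Proposition \ref{lnothetadenohyouji2}), whereas your version is self-contained and arguably the cleaner proof of this lemma in isolation. Either way the only real work is the verification of the auxiliary product identities, which you have correctly identified and sketched.
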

\begin{proof}
By Jacobi's triple product formula, we have (cf.\cite[p.3174]{onoken})
\begin{equation*}
\eta^{3}(q^{8}) = \sum_{n=0}^{\infty} (-1)^{n}(2n+1)q^{(2n+1)^{2}}, \hspace{4mm}
\frac{\eta^{2}(q)}{\eta(q^{2})} = \sum_{n=-\infty}^{\infty} (-1)^{n}q^{n^{2}} = \theta_{4}(q).
\end{equation*}
If we use the following formula \cite[p.68, Proposition 3.1]{borwein}, \cite[p.40, Entry 25]{ramanujan3}
\begin{equation*}
2\sum_{n=0}^{\infty} (-1)^{n}(2n+1)q^{(2n+1)^{2}} = \theta_{2}(q^{4})\theta_{3}(q^{4})\theta_{4}(q^{4}), \aki
2\theta_{2}(q^{2})\theta_{3}(q^{2})=\theta_{2}^{2}(q),
\end{equation*}
then we obtain the lemma.
\end{proof}

By \eqref{e32} and Lemma \ref{etatotheta}, we have
\begin{align*}
L(E_{32}, 2) = - \frac{1}{4}\int_{0}^{1} \theta_{2}^{2}(q^{2})\theta_{4}^{2}(q^{4}) \log q \frac{dq}{q} .
\end{align*}
By substituting $q^{2} \mapsto q$ and setting $q = e^{-2\pi u}$, we obtain
\begin{equation*}
L(E_{32}, 2) = \frac{\pi^{2}}{4}\int_{0}^{\infty}\theta_{2}^{2}(e^{-2\pi u})\cdot u\theta_{4}^{2}(e^{-4\pi u})du.
\end{equation*}
We use the following Lambert series expansion
\begin{equation*}
\theta_{2}^{2}(q) =4\sum_{n=0}^{\infty} \frac{q^{n+1/2}}{1+q^{2n+1}} =4 \sum_{n=1}^{\infty} 
\frac{\chi_{-4}(n)q^{n/2}}{1-q^{n}} 
=4\sum_{n,k=1}^{\infty} \chi_{-4}(n)q^{n(k-1/2)}
\end{equation*}
where $\chi_{-4} (n) := {\rm Im} ( i^{n})$. This follows from the following formulas \cite[p.115, (8.2)]{ramanujan3},  
\cite[p.35, (2.1.8)]{borwein}
\begin{equation*}
\theta_{3}^{2} (q) =1 + 4\sum_{n=1}^{\infty} \frac{q^{n}}{1+q^{2n}}, \hspace{7mm}
\theta_{2}^{2}(q^{2}) = \theta_{3}^{2}(q) - \theta_{3}^{2}(q^{2}) .
\end{equation*}
By Jacobi's imaginary transformation formula \cite[p.40, (2.3.3)]{borwein}, we have 
\begin{equation*}
u\theta_{4}^{2}(e^{-4\pi u}) = \frac{1}{4} \theta_{2}^{2}(e^{-\frac{\pi}{4u}}) 
= \sum_{r,s=1}^{\infty} \chi_{-4}(r)e^{-\frac{\pi r(s-1/2)}{4u}}.
\end{equation*}
Therefore we obtain
\begin{align*}
L(E_{32}, 2) = \pi^{2} \int_{0}^{\infty} \sum_{n,k,r,s=1}^{\infty} \chi_{-4}(nr) e^{-2\pi un(k-1/2)} 
\cdot e^{-\frac{\pi r(s-1/2)}{4u}} du. 
\end{align*}
By substituting $u \mapsto ru/(k-1/2)$, we have 
\begin{equation}
L(E_{32}, 2) 
=\pi^{2}\int_{0}^{\infty} \left( \sum_{n,r=1}^{\infty}r\chi_{-4}(nr)e^{-2\pi unr}\right) 
\left( \sum_{k,s=1}^{\infty}\frac{e^{-\frac{\pi}{4u}(s-1/2)(k-1/2)}}{k-\frac{1}{2}} \right) du . \label{totyuu}
\end{equation}
We compute the two series in the integral in the following lemmas.

\begin{lemm}\label{seriescal1}
\begin{equation*}
\sum_{k,s=1}^{\infty}\frac{e^{-\frac{\pi}{4u}(s-1/2)(k-1/2)}}{k-\frac{1}{2}}
= \frac{1}{2} \log \frac{\theta_{3}(q^{8})}{\theta_{2}(q^{8})}
\end{equation*}
where $q = e^{-2\pi u}$.
\end{lemm}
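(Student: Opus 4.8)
The plan is to evaluate the double sum by carrying out the $k$-summation in closed form first and then recognizing the resulting $s$-sum as the logarithm of a theta quotient. Writing $c = \frac{\pi}{4u}(s - 1/2)$ and using $k - 1/2 = (2k-1)/2$, the inner sum over $k$ should become
\begin{equation*}
\sum_{k=1}^{\infty} \frac{e^{-c(k-1/2)}}{k-1/2} = 2\sum_{k=1}^{\infty} \frac{(e^{-c/2})^{2k-1}}{2k-1} = \log\frac{1 + e^{-c/2}}{1 - e^{-c/2}},
\end{equation*}
by the elementary expansion $\operatorname{arctanh}(x) = \sum_{k\geq 1} x^{2k-1}/(2k-1) = \frac{1}{2}\log\frac{1+x}{1-x}$. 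Since $c/2 = \frac{\pi(2s-1)}{16u}$, we have $e^{-c/2} = v^{2s-1}$ with $v := e^{-\pi/(16u)}$, so the whole expression collapses to a single sum over $s$.

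Next I would convert that sum of logarithms into the logarithm of an infinite product,
\begin{equation*}
\sum_{s=1}^{\infty} \log\frac{1 + v^{2s-1}}{1 - v^{2s-1}} = \log\prod_{s=1}^{\infty} \frac{1 + v^{2s-1}}{1 - v^{2s-1}},
\end{equation*}
and recognize the product through the Jacobi triple product expansions $\theta_3(v) = \prod_{n\geq 1}(1-v^{2n})(1+v^{2n-1})^2$ and $\theta_4(v) = \prod_{n\geq 1}(1-v^{2n})(1-v^{2n-1})^2$. Dividing these shows that $\theta_3(v)/\theta_4(v)$ is exactly the square of the product above, so the sum equals $\frac{1}{2}\log\bigl(\theta_3(v)/\theta_4(v)\bigr)$.

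Finally I would rewrite the answer in the nome $q^8 = e^{-16\pi u}$ demanded by the statement. Setting $v = e^{-\pi t'}$ with $t' = 1/(16u)$, so that $e^{-\pi/t'} = e^{-16\pi u} = q^8$, Jacobi's imaginary transformation \cite[p.40, (2.3.3)]{borwein} (the same formula used above for $\theta_4^2$) gives $\theta_3(v) = (16u)^{1/2}\theta_3(q^8)$ and $\theta_4(v) = (16u)^{1/2}\theta_2(q^8)$, the latter because $\theta_4$ transforms into $\theta_2$. The factors $(16u)^{1/2}$ cancel in the ratio, yielding $\theta_3(v)/\theta_4(v) = \theta_3(q^8)/\theta_2(q^8)$ and hence the claimed value $\frac{1}{2}\log\bigl(\theta_3(q^8)/\theta_2(q^8)\bigr)$.

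I expect the only real obstacle to be bookkeeping rather than anything conceptual: one must track the nested scalings carefully (the substitution $e^{-c/2} = v^{2s-1}$, the square root produced by the theta product, and the nome inversion in the imaginary transformation) and justify interchanging the $k$- and $s$-summations, which is harmless here since all terms are positive and the iterated series converges absolutely for $u > 0$. The single step requiring a genuine identity is the product formula $\prod_{s\geq 1}(1+v^{2s-1})/(1-v^{2s-1}) = \bigl(\theta_3(v)/\theta_4(v)\bigr)^{1/2}$, but this falls out immediately from the triple product.
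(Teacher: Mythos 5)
Your argument is correct, and it diverges from the paper's proof in an instructive way in its second half. The first step is identical: both you and the paper sum over $k$ via the $\operatorname{arctanh}$ series to arrive at $\log\prod_{s\geq 1}\frac{1+v^{2s-1}}{1-v^{2s-1}}$ with $v=e^{-\pi/(16u)}$. From there the paper takes an eta-function detour: it rewrites the product as the eta quotient $\eta^{3}(v^{2})/\bigl(\eta^{2}(v)\eta(v^{4})\bigr)$, applies the involution $\eta(e^{-2\pi i/\tau})=\sqrt{-i\tau}\,\eta(e^{2\pi i\tau})$ to pass to the nome $q$, and then invokes the eta--theta identities \eqref{jacobifor1} and \eqref{jacobifor2} to reassemble the answer as $\bigl(\theta_{3}(q^{8})/\theta_{2}(q^{8})\bigr)^{1/2}$, along the way having to track the $\sqrt{2}$ produced by the involution. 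You instead recognize the product directly as $\bigl(\theta_{3}(v)/\theta_{4}(v)\bigr)^{1/2}$ via the Jacobi triple product and then apply the theta imaginary transformation, under which $\theta_{3}\mapsto\theta_{3}$ and $\theta_{4}\mapsto\theta_{2}$ with a common multiplier $(16u)^{1/2}$ that cancels in the ratio. Your route is shorter and cleaner: it never leaves the theta world, and the cancellation of the modular multiplier in the quotient is automatic rather than requiring the bookkeeping of $\sqrt{2}$ and the product identities \eqref{jacobifor1}--\eqref{jacobifor2}. The only point to make sure you state precisely is the pair of transformation laws $\theta_{3}(e^{-\pi t})=t^{-1/2}\theta_{3}(e^{-\pi/t})$ and $\theta_{4}(e^{-\pi t})=t^{-1/2}\theta_{2}(e^{-\pi/t})$ with $t=1/(16u)$, which is exactly what you have; the interchange of summations is indeed harmless by positivity.
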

\begin{proof}
We have
\begin{align*}
&\sum_{k,s=1}^{\infty}\frac{e^{-\frac{\pi}{4u}(s-1/2)(k-1/2)}}{k-\frac{1}{2}}
=\log \prod_{s\geqq 1}\left| \frac{1+e^{-\frac{\pi (2s-1)}{16u}}}{1-e^{-\frac{\pi (2s-1)}{16u}}} \right| \\
&=\log \prod_{s\geqq 1}\left| 
\frac{\left( 1-e^{-\frac{\pi s}{8u}} \right)^{3}}{\left( 1-e^{-\frac{\pi s}{4u}} \right) \left( 1-e^{-\frac{\pi s}{16u}} \right)^{2}}
\right|
=\log 
\left|
\frac{\eta^{3}(e^{-\frac{\pi}{8u}})}{\eta(e^{-\frac{\pi}{4u}})\eta^{2}(e^{-\frac{\pi}{16u}})}
\right|.
\end{align*}
Now, if we apply the involution for the eta function
\begin{equation*}
\eta(e^{\frac{-2\pi i}{\tau}}) = \sqrt{-i \tau} \eta(e^{2\pi i \tau}),
\end{equation*}
then we obtain
\begin{align*}
\frac{\eta^{3}(e^{-\frac{\pi}{8u}})}{\eta(e^{-\frac{\pi}{4u}})\eta^{2}(e^{-\frac{\pi}{16u}})}
=\frac{\eta^{3}(e^{-32\pi u})}{\sqrt{2} \eta(e^{-16\pi u}) \eta^{2}(e^{-64\pi u})}
=\frac{\eta^{3}(q^{16})}{\sqrt{2} \eta(q^{8}) \eta^{2}(q^{32})}
\end{align*}
where we set $q = e^{-2\pi u}$. 
If we use the formulas \cite[p.3174]{onoken}
\begin{align}
\frac{\eta^{5}(q^{2})}{\eta^{2}(q)\eta^{2}(q^{4})} = \theta_{3}(q), \aki  
\frac{\eta^{2}(q)}{\eta(q^{2})}=\theta_{4}(q),  \label{jacobifor1}
\end{align}
and 
\begin{align}
\eta^{3}(q^{8}) = \frac{1}{2}\theta_{2}(q^{4})\theta_{3}(q^{4})\theta_{4}(q^{4}),\label{jacobifor2}
\end{align}
then 
\begin{align*}
\frac{\eta^{3}(q^{16})}{\sqrt{2}\eta(q^{8}) \eta^{2}(q^{32})}
&= \frac{\eta^{5}(q^{16})}{\eta^{2}(q^{8}) \eta^{2}(q^{32})} \cdot \frac{\eta(q^{8})\eta(q^{16})}{\sqrt{2}\eta^{3}(q^{16})} \\
&=\theta_{3}(q^{8}) \cdot 
\frac{\frac{1}{\sqrt{2}}\theta_{2}^{\frac{1}{2}}(q^{8})\theta_{3}^{\frac{1}{2}}(q^{8})\theta_{4}(q^{8})}
{\sqrt{2}\cdot \frac{1}{2}\theta_{2}(q^{8})\theta_{3}(q^{8})\theta_{4}(q^{8})} 
=\left( \frac{\theta_{3}(q^{8})}{\theta_{2}(q^{8})}\right) ^{\frac{1}{2}}.
\end{align*}
Hence we have the lemma.
\end{proof}

\begin{lemm}\label{seriescal2}
\begin{equation*}
\sum_{n,r=1}^{\infty}r\chi_{-4}(nr)q^{nr} 
= \frac{1}{2} \theta_{2}(q^{4})\theta_{3}(q^{4})\left(\theta_{3}^{2}(q^{4}) - \theta_{2}^{2}(q^{4}) \right).
\end{equation*}
\end{lemm}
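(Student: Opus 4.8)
The plan is to prove the identity by reducing both sides to the same twisted divisor sum. First I would exploit the complete multiplicativity of $\chi_{-4}$: since $\chi_{-4}(nr) = \chi_{-4}(n)\chi_{-4}(r)$, grouping the terms of the left-hand side according to $m = nr$ gives
\[
\sum_{n,r \geq 1} r\chi_{-4}(nr)q^{nr} = \sum_{m\geq 1}\Bigl(\sum_{r\mid m} r\,\chi_{-4}(r)\chi_{-4}(m/r)\Bigr)q^{m} = \sum_{m\geq 1}\chi_{-4}(m)\sigma_{1}(m)\,q^{m},
\]
where $\sigma_{1}(m) = \sum_{r\mid m} r$; equivalently, summing over $n$ first and using $\sum_{n\geq1}\chi_{-4}(n)x^{n} = x/(1+x^{2})$ rewrites the left-hand side as the Lambert series $\sum_{r\geq1} r\chi_{-4}(r)q^{r}/(1+q^{2r})$. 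Note that $\chi_{-4}(m) = 0$ for even $m$, so only odd $m$ contribute, consistent with the fact that a product of two odd divisors is odd.

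Next I would simplify the right-hand side using the duplication identities already recorded in the excerpt. From $2\theta_{2}(q^{2})\theta_{3}(q^{2}) = \theta_{2}^{2}(q)$ one gets $\theta_{2}(q^{4})\theta_{3}(q^{4}) = \tfrac12\theta_{2}^{2}(q^{2})$, while the difference-of-squares factorization together with the standard quadratic transformations $\theta_{3}(q^{4})+\theta_{2}(q^{4}) = \theta_{3}(q)$, $\theta_{3}(q^{4})-\theta_{2}(q^{4}) = \theta_{4}(q)$ and $\theta_{3}(q)\theta_{4}(q) = \theta_{4}^{2}(q^{2})$ gives $\theta_{3}^{2}(q^{4}) - \theta_{2}^{2}(q^{4}) = \theta_{4}^{2}(q^{2})$. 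Hence the right-hand side collapses to
\[
\tfrac12\,\theta_{2}(q^{4})\theta_{3}(q^{4})\bigl(\theta_{3}^{2}(q^{4}) - \theta_{2}^{2}(q^{4})\bigr) = \tfrac14\,\theta_{2}^{2}(q^{2})\theta_{4}^{2}(q^{2}),
\]
so the lemma becomes the theta identity $\tfrac14\theta_{2}^{2}(q^{2})\theta_{4}^{2}(q^{2}) = \sum_{m\geq1}\chi_{-4}(m)\sigma_{1}(m)q^{m}$.

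To finish I would expand each theta-square as a Lambert series — using $\theta_{2}^{2}(q^{2}) = 4\sum_{n\geq1}\chi_{-4}(n)q^{n}/(1-q^{2n})$ from the excerpt and the analogous expansion of $\theta_{4}^{2}(q^{2})$ obtained from $\theta_{3}^{2}(q) = 1 + 4\sum_{n\geq1}q^{n}/(1+q^{2n})$ — multiply the two, and collect the coefficient of each $q^{M}$. The main obstacle is precisely this step: showing that the Cauchy product of the two Lambert series collapses to the clean twisted divisor sum $\chi_{-4}(M)\sigma_{1}(M)$. This is a bookkeeping computation in which one must track the parities and the sign contributions of $\chi_{-4}$ through the various $q\mapsto q^{2}$ substitutions, and it is again the complete multiplicativity of $\chi_{-4}$ that forces the resulting divisor sum to factor as $\chi_{-4}(M)\sum_{r\mid M}r$. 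Alternatively, one can bypass the rearrangement entirely by observing that both sides are holomorphic modular forms of weight $2$ and level $16$ lying in the Eisenstein subspace, and then checking equality on finitely many Fourier coefficients via a Sturm-type bound.
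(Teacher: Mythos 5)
Your two reductions are correct and worth keeping: the left-hand side is indeed $\sum_{m\ge 1}\chi_{-4}(m)\sigma_1(m)q^m$ (in fact you do not even need multiplicativity for this, since $\chi_{-4}(nr)$ depends only on $m=nr$), and the right-hand side does collapse to $\tfrac14\theta_2^2(q^2)\theta_4^2(q^2)$ via $2\theta_2(q^2)\theta_3(q^2)=\theta_2^2(q)$ and $\theta_3^2(q^4)-\theta_2^2(q^4)=\theta_3(q)\theta_4(q)=\theta_4^2(q^2)$. The problem is that what remains --- the identity $\tfrac14\theta_2^2(q^2)\theta_4^2(q^2)=\sum_m\chi_{-4}(m)\sigma_1(m)q^m$ --- is the entire content of the lemma, and the elementary mechanism you propose for it does not work as described. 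Multiplying the Lambert series for $\theta_2^2(q^2)$ and $\theta_4^2(q^2)$ produces, as the coefficient of $q^M$, an \emph{additive} convolution $\sum_{a+b=M}(\cdots)(\cdots)$ of two divisor-type functions; complete multiplicativity of $\chi_{-4}$ says nothing about such additive decompositions, so there is no a priori reason for this double sum to ``factor'' into the single twisted divisor sum $\chi_{-4}(M)\sigma_1(M)$. Identities of this shape (compare Jacobi's four-square theorem, $\theta_3^4=$ Eisenstein series) are genuinely nontrivial and are exactly what needs proving here; calling it bookkeeping hides the gap rather than closing it. Your fallback via modular forms is sound in principle --- both sides lie in $M_2(\Gamma_0(16))$ and the Sturm bound is small, so checking a few coefficients would suffice --- but you would still have to establish the weight, level and holomorphy of each side (in particular that the twisted Eisenstein series $\sum\chi_{-4}(m)\sigma_1(m)q^m$ is modular of that level), none of which is carried out.

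For comparison, the paper avoids the convolution problem entirely by a complex-variable trick: writing $\chi_{-4}(m)=\mathrm{Im}(i^m)$, the double sum becomes $\mathrm{Im}\bigl(\sum_{r\ge1}r(iq)^r/(1-(iq)^r)\bigr)=-\tfrac1{24}\mathrm{Im}\,L(iq)$, where $L$ is Ramanujan's weight-two Eisenstein series. Ramanujan's identity $3\theta_3^4(q)=4L(q^4)-L(q)$ then gives $\mathrm{Im}\,L(iq)=-3\,\mathrm{Im}\,\theta_3^4(iq)$ because $L((iq)^4)=L(q^4)$ is real, and the splitting $\theta_3(iq)=\theta_3(q^4)+i\theta_2(q^4)$ yields $\mathrm{Im}\,\theta_3^4(iq)=4\theta_3^3(q^4)\theta_2(q^4)-4\theta_3(q^4)\theta_2^3(q^4)$, which is the claimed right-hand side. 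If you want to salvage your route, the honest way to finish is the Sturm-bound argument, made precise; otherwise the step you flag as the ``main obstacle'' needs an actual proof, and the quoted Ramanujan identity is the standard tool for it.
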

\begin{proof}
Since $\chi_{-4} (n) = {\rm Im}( i^{n} ) $, 
\begin{align*}
\sum_{n,r=1}^{\infty}r\chi_{-4}(nr)q^{nr} = {\rm Im}\left( \sum_{n,r \geqq 1} r \left( i q \right) ^{nr} \right)
= {\rm Im}
\left( \sum_{r \geqq 1}  \frac{r \left( i q \right) ^{r}}{1-\left( i q \right) ^{r}} \right)
= -\frac{1}{24}{\rm Im}\left( L(iq)\right)
\end{align*}
where 
\begin{equation*}
L(q) := 1 -24\sum_{n=1}^{\infty} \frac{nq^{n}}{1-q^{n}}.
\end{equation*}
Ramanujan proved \cite[p.377, Entry 38]{ramanujan4} that
\begin{equation*}
3\theta_{3}^{4}(q) = 4L(q^{4}) - L(q),
\end{equation*}
hence we have
\begin{align*}
\sum_{n,r=1}^{\infty}r \chi_{-4}(nr)q^{nr} = \frac{1}{8}{\rm Im}\left( \theta_{3}^{4}(iq) \right).
\end{align*}
If we use the following formula \cite[p.73]{borwein}
\begin{equation*}
\theta_{3}(iq) = \theta_{3}(q^{4}) + i\theta_{2}(q^{4}),
\end{equation*}
then we obtain
\begin{align*}
{\rm Im}\left( \theta_{3}^{4}(iq) \right) 
= 4\theta_{3}^{3}(q^{4})\theta_{2}(q^{4}) - 4\theta_{3}(q^{4})\theta_{2}^{3}(q^{4}).
\end{align*}
Therefore we have the lemma.
\end{proof}

By applying Lemmas \ref{seriescal1} and \ref{seriescal2} to \eqref{totyuu}, we obtain
\begin{equation*}
L(E_{32}, 2)=
\frac{\pi}{8}\int_{0}^{1} \theta_{2}(q^{4})\theta_{3}(q^{4})\left(\theta_{3}^{2}(q^{4}) - \theta_{2}^{2}(q^{4}) \right)
\left( \log \frac{\theta_{3}(q^{8})}{\theta_{2}(q^{8})} \right) \frac{dq}{q}. 
\end{equation*}
Then, by substituting $q^{4} \mapsto q$, we have Proposition \ref{thetaprod}. $\qed$ \\

Now we express the value $L(E_{32}, 2)$ in terms of the values of $_{3}F_{2}$ at $z=1$.
\begin{theo}\label{n=4nol}
\begin{equation*}
L(E_{32}, 2) = 
\frac{\sqrt{\pi}\Gamma^{2}\left( \frac{1}{4}\right)}{32\sqrt{2}}
{_{3}F_{2}}\left[ 
\left.
\begin{matrix}
\frac{1}{2}, \frac{1}{2}, 1 \\
\frac{3}{2}, \frac{3}{4}
\end{matrix}
\right| 1
\right]
-
\frac{\sqrt{\pi}\Gamma^{2}\left( \frac{3}{4}\right)}{8\sqrt{2}}
{_{3}F_{2}}\left[ 
\left.
\begin{matrix}
\frac{1}{2}, \frac{1}{2}, 1 \\
\frac{3}{2}, \frac{5}{4}
\end{matrix}
\right| 1
\right] .
\end{equation*}
\end{theo}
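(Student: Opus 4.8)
The plan is to start from Proposition~\ref{thetaprod} and carry out the uniformization by the elliptic modulus that underlies the Rogers--Zudilin method, turning the theta-function integral into an integral of elementary functions which I then recognize as a combination of Gauss hypergeometric integrals. Writing $q=e^{-\pi K'/K}$ for the nome attached to a modulus $k$, with $k'=\sqrt{1-k^{2}}$ and $K=K(k)$ the complete elliptic integral, I would use the classical evaluations $\theta_{3}^{2}(q)=\tfrac{2}{\pi}K$, $\theta_{2}^{2}(q)=k\,\theta_{3}^{2}(q)$, $\theta_{4}^{2}(q)=k'\theta_{3}^{2}(q)$, so that
\[
\theta_{2}(q)\theta_{3}(q)\bigl(\theta_{3}^{2}(q)-\theta_{2}^{2}(q)\bigr)=\frac{4K^{2}}{\pi^{2}}\sqrt{k}\,(1-k).
\]
For the logarithmic factor I would pass from the nome $q^{2}$ to $q$ via $\theta_{3}^{2}(q)=\theta_{3}^{2}(q^{2})+\theta_{2}^{2}(q^{2})$ and $\theta_{4}^{2}(q)=\theta_{3}^{2}(q^{2})-\theta_{2}^{2}(q^{2})$, which give $\theta_{3}^{2}(q^{2})=\tfrac{K}{\pi}(1+k')$ and $\theta_{2}^{2}(q^{2})=\tfrac{K}{\pi}(1-k')$, hence $\log\frac{\theta_{3}(q^{2})}{\theta_{2}(q^{2})}=\tfrac12\log\frac{1+k'}{1-k'}$.

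The next step is the change of variables $q\mapsto k$. Since $\log q=-\pi K'/K$, differentiating and applying Legendre's relation $EK'+E'K-KK'=\tfrac{\pi}{2}$ yields $\frac{dq}{q}=\frac{\pi^{2}}{2\,k\,k'^{2}K^{2}}\,dk$, and the range $q\in(0,1)$ corresponds to $k\in(0,1)$. Substituting all three ingredients into Proposition~\ref{thetaprod}, the factors $K^{2}$ and $\pi^{2}$ cancel and, using $1-k=k'^{2}/(1+k)$, I obtain the elementary integral
\[
L(E_{32},2)=\frac{\pi}{32}\int_{0}^{1}\frac{1}{\sqrt{k}\,(1+k)}\,\log\frac{1+k'}{1-k'}\,dk .
\]

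To reach the stated hypergeometric values I would substitute $k'=\sqrt{t}$ (equivalently $k=\sqrt{1-t}$), giving $\log\frac{1+k'}{1-k'}=2\,\mathrm{artanh}\sqrt{t}$, and rationalize $\frac{1}{1+\sqrt{1-t}}=\frac{1-\sqrt{1-t}}{t}$ to split the integral into
\[
\int_{0}^{1}\frac{\mathrm{artanh}\sqrt{t}}{t\,(1-t)^{3/4}}\,dt-\int_{0}^{1}\frac{\mathrm{artanh}\sqrt{t}}{t\,(1-t)^{1/4}}\,dt .
\]
Each piece is an Euler integral for ${}_{3}F_{2}$: using ${}_{2}F_{1}(\tfrac12,1;\tfrac32;t)=\mathrm{artanh}\sqrt{t}/\sqrt{t}$ and integrating the parameter $c=\tfrac12$ against $t^{c-1}(1-t)^{f-c-1}$ with $f=\tfrac34$ and $f=\tfrac54$ respectively identifies the two integrals with $\frac{\Gamma(1/2)\Gamma(1/4)}{\Gamma(3/4)}\,{}_{3}F_{2}[\tfrac12,\tfrac12,1;\tfrac32,\tfrac34;1]$ and $\frac{\Gamma(1/2)\Gamma(3/4)}{\Gamma(5/4)}\,{}_{3}F_{2}[\tfrac12,\tfrac12,1;\tfrac32,\tfrac54;1]$. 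Simplifying the Gamma prefactors by the reflection formula $\Gamma(\tfrac14)\Gamma(\tfrac34)=\pi\sqrt{2}$ and $\Gamma(\tfrac54)=\tfrac14\Gamma(\tfrac14)$ produces exactly the constants $\frac{\sqrt{\pi}\,\Gamma^{2}(1/4)}{32\sqrt{2}}$ and $\frac{\sqrt{\pi}\,\Gamma^{2}(3/4)}{8\sqrt{2}}$ in the statement.

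The main obstacle I anticipate is analytic bookkeeping rather than any single hard identity. The delicate points are the Jacobian computation (the derivative of $K'/K$ through Legendre's relation) and the legitimacy of the whole $q\mapsto k$ substitution, including that the endpoints $k\to0,1$ correspond to $q\to0,1$. One must also verify the hypotheses of the Euler integral representation of ${}_{3}F_{2}$, namely $\mathrm{Re}(f)>\mathrm{Re}(c)=\tfrac12$, which holds for both $f=\tfrac34$ and $f=\tfrac54$, and justify the interchange of summation and integration used to recover the hypergeometric series; fortunately each split integral has only an integrable $t^{-1/2}$-type singularity at $t=0$ and an integrable $(1-t)^{-3/4}$- or $(1-t)^{-1/4}$-type singularity at $t=1$, so the splitting itself is unproblematic.
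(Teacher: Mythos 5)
Your proposal is correct and is essentially the paper's own argument in different clothing: the paper performs the same modulus substitution via Ramanujan's parametrization $q=e^{-y(x)}$ with $x=k^{2}$ (the identity $\theta_{3}^{4}(q)\,\frac{dq}{q}=\frac{dx}{x(1-x)}$ is exactly your Legendre-relation Jacobian), and your elementary integral $\frac{\pi}{32}\int_{0}^{1}\frac{1}{\sqrt{k}(1+k)}\log\frac{1+k'}{1-k'}\,dk$ coincides with the paper's $x$-integral after rationalizing the logarithm. The only cosmetic difference is the finish: the paper expands the logarithm in powers of $(1-x)^{1/2}$ and sums beta integrals (with a parity case split), whereas you package the same term-by-term integration as the Euler integral representation of ${}_{3}F_{2}$ applied to ${}_{2}F_{1}(\tfrac12,1;\tfrac32;t)=\mathrm{artanh}\sqrt{t}/\sqrt{t}$, which lands on the same constants.
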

\begin{proof}
Let 
\begin{equation*}
z(x) := {_{2}}F_{1}
\left[ \left.
\begin{matrix}
\frac{1}{2}, \frac{1}{2} \\
1 
\end{matrix}
\right| x
\right], \hspace{5mm} y(x) := \pi\frac{z(1-x)}{z(x)}.
\end{equation*}
Set 
\begin{equation*}
q = e^{-y(x)} .
\end{equation*} 
We know \cite[p.87, Entry 30]{ramanujan2}, \cite[p.101, Entry 6]{ramanujan3} that
\begin{equation*}
\theta_{3}^{4}(q)\frac{dq}{q} = \frac{dx}{x(1-x)}.
\end{equation*} 
It is also known \cite[Entry 10, 11]{ramanujan3} that 
\begin{align*}
\theta_{3}(q) = \sqrt{z(x)}, &\hspace{5mm} \theta_{3}(q^{2}) = \sqrt{\frac{z(x)}{2}}\left( 1+\sqrt{1-x} \right) ^{\frac{1}{2}}, \\
\theta_{2}(q) = \sqrt{z(x)}x^{\frac{1}{4}}, &\hspace{5mm} \theta_{2}(q^{2}) = \sqrt{\frac{z(x)}{2}}(1- \sqrt{1-x})^{\frac{1}{2}}.
\end{align*}
Therefore, 
\begin{align*}
L(E_{32}, 2) 
&=-\frac{\pi}{32}\int_{0}^{1}
\frac{\theta_{2}(q)}{\theta_{3}(q)}\left(1 - \frac{\theta_{2}^{2}(q)}{\theta_{3}^{2}(q)} \right)
\log \left( \frac{\theta_{2}(q^{2})}{\theta_{3}(q^{2})}\right)  \theta_{3}^{4}(q)\frac{dq}{q} \\
&=-\frac{\pi}{32} \int_{0}^{1}
x^{1/4}\left(1 - x^{1/2} \right)
\log \left( \frac{(1- \sqrt{1-x})^{\frac{1}{2}}}{ (1+\sqrt{1-x}) ^{\frac{1}{2}}} \right)  \frac{dx}{x(1-x)} \\
&=-\frac{\pi}{32}\int_{0}^{1}
x^{1/4}\left(1 - x^{1/2} \right)
\log \left( \frac{1- (1-x)^{1/2}}{ x^{1/2}}\right)  \frac{dx}{x(1-x)} .
\end{align*}
If we use the formula
\begin{equation*}
\log \left( \frac{1- (1-x)^{1/2}}{ x^{1/2}}\right)
= \sum_{n=1}^{\infty} \frac{(1-x)^{n}-2(1-x)^{\frac{n}{2}}}{2n},
\end{equation*}
and perform term-by-term integration using beta integrals
\begin{equation*}
B(\alpha, \beta) := \int_{0}^{1} x^{\alpha -1}(1-x)^{\beta -1} dx, 
\end{equation*}
we obtain
\begin{align*}
L(E_{32},2) 
=-\frac{\pi}{32} \sum_{n=1}^{\infty}\frac{1}{2n}\left( B\left(\frac{1}{4},n\right) - 2B\left(\frac{1}{4},\frac{n}{2}\right)
- B\left(\frac{3}{4},n \right) + 2 B\left(\frac{3}{4}, \frac{n}{2} \right)   \right). 
\end{align*}
Using Pochhammer symbols, the values of beta function are represented as follows:
\begin{align*}
B\left( \frac{1}{4}, n \right) &= \frac{\Gamma\left( \frac{1}{4}\right)\Gamma (n)}{\Gamma\left( \frac{1}{4}+n \right)} 
= \frac{\Gamma(n)}{\left( \frac{1}{4} \right)_{n}} ,\hspace{5mm} 
B\left( \frac{3}{4}, n \right) = \frac{\Gamma\left( \frac{3}{4}\right)\Gamma (n)}{\Gamma\left( \frac{3}{4}+n \right)} 
= \frac{\Gamma(n)}{\left( \frac{3}{4} \right)_{n}}, \\
B\left( \frac{1}{4}, \frac{n}{2} \right) &= \left\{
\begin{aligned}
&B\left( \frac{1}{4}, \frac{2m}{2} \right) = \frac{\Gamma(m)}{\left( \frac{1}{4} \right)_{m}} &(n \equiv 0 \mod 2), \\
&B\left( \frac{1}{4}, \frac{2m+1}{2} \right) 
= \frac{\Gamma\left( \frac{1}{4}\right)\Gamma\left( \frac{1}{2}\right)\left( \frac{1}{2} \right)_{m}}
{\Gamma\left( \frac{3}{4} \right)\left( \frac{3}{4} \right)_{m}} &(n \equiv 1 \mod 2),
\end{aligned}
\right. \\
B\left( \frac{3}{4}, \frac{n}{2} \right) &= \left\{
\begin{aligned}
&B\left( \frac{3}{4}, \frac{2m}{2} \right) = \frac{\Gamma(m)}{\left( \frac{3}{4} \right)_{m}} &(n \equiv 0 \mod 2), \\
&B\left( \frac{3}{4}, \frac{2m+1}{2} \right) 
= \frac{4\Gamma\left( \frac{3}{4}\right)\Gamma\left( \frac{1}{2}\right)\left( \frac{1}{2}\right)_{m}}
{\Gamma\left( \frac{1}{4}\right)\left( \frac{5}{4}\right)_{m}} &(n \equiv 1 \mod 2).
\end{aligned}
\right.
\end{align*}
Hence we obtain
\begin{align*}
L(E_{32}, 2) = \frac{\pi\Gamma\left( \frac{1}{4}\right)\Gamma\left( \frac{1}{2}\right)}{32\Gamma\left( \frac{3}{4}\right)} 
\sum_{m=0}^{\infty} \frac{\left( \frac{1}{2} \right)_{m}}{(2m+1)\left( \frac{3}{4} \right)_{m}}
- \frac{\pi\Gamma\left( \frac{3}{4}\right)\Gamma\left( \frac{1}{2}\right)}{8\Gamma\left( \frac{1}{4}\right)} 
\sum_{m=0}^{\infty} \frac{\left( \frac{1}{2} \right)_{m}}{(2m+1)\left( \frac{5}{4} \right)_{m}} .
\end{align*}
Note that these series are hypergeometric functions. In fact, we have
\begin{align*}
\sum_{m=0}^{\infty} \frac{\left( \frac{1}{2} \right)_{m}}{(2m+1)\left( \frac{3}{4} \right)_{m}}
= \sum_{m=0}^{\infty} \frac{\left( \frac{1}{2} \right)_{m}\left( \frac{1}{2} \right)_{m}(1)_{m}}
{(2m+1)\left( \frac{1}{2} \right)_{m}\left( \frac{3}{4} \right)_{m}(1)_{m}} 
={_{3}F_{2}}\left[ 
\left.
\begin{matrix}
\frac{1}{2}, \frac{1}{2}, 1 \\
\frac{3}{2}, \frac{3}{4}
\end{matrix}
\right| 1
\right]. \label{n=4nobaainol1}
\end{align*}
Similarly, we have 
\begin{align*}
\sum_{m=0}^{\infty} \frac{\left( \frac{1}{2} \right)_{m}}{(2m+1)\left( \frac{5}{4} \right)_{m}}
={_{3}F_{2}}\left[ 
\left.
\begin{matrix}
\frac{1}{2}, \frac{1}{2}, 1 \\
\frac{3}{2}, \frac{5}{4}
\end{matrix}
\right| 1
\right]. 
\end{align*}
Hence we obtain
\begin{align*}
L(E_{32}, 2) = 
\frac{\pi\Gamma\left( \frac{1}{4}\right)\Gamma\left( \frac{1}{2}\right)}{32\Gamma\left( \frac{3}{4} \right)}
{_{3}F_{2}}\left[ 
\left.
\begin{matrix}
\frac{1}{2}, \frac{1}{2}, 1 \\
\frac{3}{2}, \frac{3}{4}
\end{matrix}
\right| 1
\right]
-
\frac{\pi\Gamma\left( \frac{3}{4}\right)\Gamma\left( \frac{1}{2}\right)}{8\Gamma\left( \frac{1}{4} \right)}
{_{3}F_{2}}\left[ 
\left.
\begin{matrix}
\frac{1}{2}, \frac{1}{2}, 1 \\
\frac{3}{2}, \frac{5}{4}
\end{matrix}
\right| 1
\right] .
\end{align*}
If we use the formulas
\begin{equation*}
\Gamma\left( \frac{1}{4}\right) \Gamma\left( \frac{3}{4}\right) = \sqrt{2}\pi, \hspace{5mm} \Gamma\left( \frac{1}{2}\right)=\sqrt{\pi},
\end{equation*}
then we finally obtain the theorem.
\end{proof}

\subsection{Conductor 64}

We know that the modular form corresponding to $E_{64}$ is $\frac{\eta^{8}(q^{8})}{\eta^{2}(q^{4})\eta^{2}(q^{16})}$ (cf.\cite{onoken}).

\begin{prop}\label{lnothetadenohyouji2}
\begin{equation*}
L(E_{64}, 2) =
\frac{\pi}{64}\int_{0}^{1}
\theta_{2}(q)\theta_{3}(q)\left(\theta_{3}^{2}(q) - \theta_{2}^{2}(q) \right)
\log \left( \frac{\theta_{3}(q^{4})}{\theta_{2}(q^{4})}\right)  \frac{dq}{q}.
\end{equation*}
\end{prop}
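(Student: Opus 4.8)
The plan is to run the proof of Proposition \ref{thetaprod} verbatim for $E_{64}$, tracking how the doubling of the conductor rescales each intermediate quantity. First I would establish the analogue of Lemma \ref{etatotheta}, namely
\begin{equation*}
\frac{\eta^{8}(q^{8})}{\eta^{2}(q^{4})\eta^{2}(q^{16})} = \frac{1}{4}\theta_{2}^{2}(q^{2})\theta_{4}^{2}(q^{8}).
\end{equation*}
This is immediate from \eqref{jacobifor1} and \eqref{jacobifor2}: together they give $\theta_{2}(q) = 2\eta^{2}(q^{4})/\eta(q^{2})$ and $\theta_{4}(q)=\eta^{2}(q)/\eta(q^{2})$, whence $\theta_{2}^{2}(q^{2}) = 4\eta^{4}(q^{8})/\eta^{2}(q^{4})$ and $\theta_{4}^{2}(q^{8}) = \eta^{4}(q^{8})/\eta^{2}(q^{16})$, and multiplying yields the claim. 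Substituting this into \eqref{e64}, then performing the change of variables $q^{2}\mapsto q$ and $q=e^{-2\pi u}$ exactly as for $E_{32}$, I would arrive at
\begin{equation*}
L(E_{64},2) = \frac{\pi^{2}}{4}\int_{0}^{\infty}\theta_{2}^{2}(e^{-2\pi u})\cdot u\theta_{4}^{2}(e^{-8\pi u})\,du,
\end{equation*}
which differs from the $E_{32}$ expression only in that $\theta_{4}^{2}(e^{-4\pi u})$ is replaced by $\theta_{4}^{2}(e^{-8\pi u})$.

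Next I would insert the Lambert series for $\theta_{2}^{2}$ and apply Jacobi's imaginary transformation to the second factor. Because the nome is now $e^{-8\pi u}$ rather than $e^{-4\pi u}$, the transformation produces $u\theta_{4}^{2}(e^{-8\pi u}) = \tfrac18\theta_{2}^{2}(e^{-\pi/(8u)})$, so after expanding both factors and performing the substitution $u\mapsto ru/(k-1/2)$ that decouples the two summations, the integral factors as
\begin{equation*}
L(E_{64},2) = \frac{\pi^{2}}{2}\int_{0}^{\infty}\Bigl(\sum_{n,r\geqq1}r\chi_{-4}(nr)e^{-2\pi unr}\Bigr)\Bigl(\sum_{k,s\geqq1}\frac{e^{-\frac{\pi}{8u}(s-1/2)(k-1/2)}}{k-\frac12}\Bigr)du,
\end{equation*}
the only changes from \eqref{totyuu} being the overall constant and the $\tfrac{\pi}{8u}$ in the exponent of the second series.

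The first inner sum is literally the series of Lemma \ref{seriescal2}, so it equals $\tfrac12\theta_{2}(q^{4})\theta_{3}(q^{4})(\theta_{3}^{2}(q^{4})-\theta_{2}^{2}(q^{4}))$ with $q=e^{-2\pi u}$. For the second sum I would prove the analogue of Lemma \ref{seriescal1},
\begin{equation*}
\sum_{k,s\geqq1}\frac{e^{-\frac{\pi}{8u}(s-1/2)(k-1/2)}}{k-\frac12} = \frac12\log\frac{\theta_{3}(q^{16})}{\theta_{2}(q^{16})},
\end{equation*}
by repeating that lemma's proof with $\tfrac{\pi}{4u}$ replaced by $\tfrac{\pi}{8u}$: the $q$-product becomes $\prod_{s\geqq1}(1+e^{-\pi(2s-1)/(32u)})/(1-e^{-\pi(2s-1)/(32u)})$, which rewrites as the eta quotient $\eta^{3}(e^{-\pi/(16u)})/(\eta(e^{-\pi/(8u)})\eta^{2}(e^{-\pi/(32u)}))$, and the eta involution turns this into $\eta^{3}(q^{32})/(\sqrt2\,\eta(q^{16})\eta^{2}(q^{64}))$ — precisely the $q\mapsto q^{2}$ image of the quantity computed in Lemma \ref{seriescal1}, hence equal to $(\theta_{3}(q^{16})/\theta_{2}(q^{16}))^{1/2}$. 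Combining the two evaluations, converting the $u$-integral back to a $q$-integral via $du = -\tfrac{1}{2\pi}\,dq/q$, and finally substituting $q^{4}\mapsto q$ collapses the constants to $\pi/64$ and gives the stated formula. I expect the only real work to be this second lemma: one must check that the rescaled $q$-product identity and the $\sqrt{\cdot}$-factors from the involution conspire to produce exactly $\theta_{3}(q^{16})/\theta_{2}(q^{16})$, and that the accumulated constants yield $\pi/64$ rather than $\pi/32$.
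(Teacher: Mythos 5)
Your proposal is correct and follows exactly the route the paper intends: the paper's proof establishes the same eta--theta identity, reduces to $-\frac{1}{16}\int_0^1\theta_2^2(q)\theta_4^2(q^4)\log q\,\frac{dq}{q}$, and then simply says ``by similar calculations as in Proposition \ref{thetaprod}''--- the calculations you spell out (with $u\theta_4^2(e^{-8\pi u})=\frac18\theta_2^2(e^{-\pi/(8u)})$, the rescaled analogue of Lemma \ref{seriescal1} yielding $\frac12\log(\theta_3(q^{16})/\theta_2(q^{16}))$, and the constants collapsing to $\pi/64$) are precisely those, and all of them check out.
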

\begin{proof}
We have the identity 
\begin{equation*}
\frac{\eta^{8}(q^{8})}{\eta^{2}(q^{4})\eta^{2}(q^{16})} = \frac{1}{4}\theta_{2}^{2}(q^{2})\theta_{4}^{2}(q^{8}).
\end{equation*}
This identity follows from \eqref{jacobifor1}, \eqref{jacobifor2} and $\theta_{3}(q) \theta_{4}(q) = \theta_{4}^{2}(q^{2})$
\cite[p.34, (2.1.7ii)]{borwein}.
Therefore we obtain
\begin{equation*}
L(E_{64}, 2) = -\frac{1}{4}\int_{0}^{1} \theta_{2}^{2}(q^{2})\theta_{4}^{2}(q^{8}) \log q \frac{dq}{q}
= -\frac{1}{16}\int_{0}^{1} \theta_{2}^{2}(q)\theta_{4}^{2}(q^{4}) \log q \frac{dq}{q}.
\end{equation*}
By similar calculations as in Proposition \ref{thetaprod}, we have the proposition. 
\end{proof}

Now we express the value $L(E_{64}, 2)$ in terms of values of $_{3}F_{2}$ at $z=1$.
\begin{theo}\label{cond64}
\begin{equation*}
L(E_{64}, 2) = 
\frac{\sqrt{\pi}\Gamma^{2}\left( \frac{1}{4}\right)}{32}
{_{3}F_{2}}\left[ 
\left.
\begin{matrix}
\frac{1}{4}, \frac{1}{4}, 1 \\
\frac{1}{2}, \frac{5}{4}
\end{matrix}
\right| 1
\right]
-
\frac{\sqrt{\pi}\Gamma^{2}\left( \frac{3}{4}\right)}{48}
{_{3}F_{2}}\left[ 
\left.
\begin{matrix}
\frac{3}{4}, \frac{3}{4}, 1 \\
\frac{3}{2}, \frac{7}{4}
\end{matrix}
\right| 1
\right] .
\end{equation*}
\end{theo}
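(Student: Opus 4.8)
The plan is to imitate the proof of Theorem~\ref{n=4nol}, keeping the parametrization $z(x) = {}_{2}F_{1}[\frac12,\frac12;1;x]$, $y(x) = \pi z(1-x)/z(x)$, $q = e^{-y(x)}$ together with $\theta_{3}^{4}(q)\,dq/q = dx/(x(1-x))$, $\theta_{3}(q) = \sqrt{z(x)}$, $\theta_{2}(q) = \sqrt{z(x)}\,x^{1/4}$ and the companion formulas for $\theta_{j}(q^{2})$. The one new ingredient is an expression for $\theta_{2}(q^{4})/\theta_{3}(q^{4})$ in terms of $x$, which I would obtain by iterating the $q \mapsto q^{2}$ formulas: regarding $q^{2}$ as a nome with its own parameter $x_{1}$, where $\sqrt{x_{1}} = (1-\sqrt{1-x})/(1+\sqrt{1-x})$, and applying the same formulas once more to pass from $q^{2}$ to $q^{4}$. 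Setting $u = \sqrt{1-x}$ and simplifying $(1\mp\sqrt{1-x_{1}}) = (1\mp\sqrt{u})^{2}/(1+u)$ yields
\[
\frac{\theta_{2}(q^{4})}{\theta_{3}(q^{4})}
= \left( \frac{1-\sqrt{1-x_{1}}}{1+\sqrt{1-x_{1}}} \right)^{1/2}
= \frac{1-(1-x)^{1/4}}{1+(1-x)^{1/4}} .
\]

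Substituting this, together with $\theta_{2}(q)\theta_{3}(q)(\theta_{3}^{2}(q)-\theta_{2}^{2}(q))\,dq/q = x^{1/4}(1-x^{1/2})\,dx/(x(1-x))$, into Proposition~\ref{lnothetadenohyouji2} reduces the claim to
\[
L(E_{64},2) = -\frac{\pi}{64}\int_{0}^{1} x^{1/4}(1-x^{1/2})
\log\frac{1-(1-x)^{1/4}}{1+(1-x)^{1/4}}\,\frac{dx}{x(1-x)} .
\]
I would then expand $\log\frac{1-w}{1+w} = -2\sum_{k\geq 0} w^{2k+1}/(2k+1)$ with $w = (1-x)^{1/4}$, split the factor $1-x^{1/2}$, and integrate term by term with the beta integral, obtaining
\[
L(E_{64},2) = \frac{\pi}{32}\sum_{k=0}^{\infty}\frac{1}{2k+1}
\left[ B\!\left(\tfrac14,\tfrac{2k+1}{4}\right) - B\!\left(\tfrac34,\tfrac{2k+1}{4}\right) \right].
\]

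Since the second beta-argument $(2k+1)/4$ is now a quarter-integer, I would separate the sum by the parity of $k$: the substitutions $k=2m$ and $k=2m+1$ turn it into $m+\tfrac14$ and $m+\tfrac34$, which convert cleanly to Pochhammer symbols through $\Gamma(m+a) = \Gamma(a)(a)_{m}$. Rewriting the four resulting beta values and using $(\tfrac54)_{m} = (4m+1)(\tfrac14)_{m}$, $(\tfrac74)_{m} = \tfrac{4m+3}{3}(\tfrac34)_{m}$, the terms carrying the denominators $(\tfrac12)_{m}$ and $(\tfrac32)_{m}$ assemble, after simplifying the gamma factors with $\Gamma(\tfrac14)\Gamma(\tfrac34) = \sqrt2\,\pi$, $\Gamma(\tfrac12) = \sqrt\pi$ and $\Gamma(\tfrac32) = \tfrac12\sqrt\pi$, into precisely the two ${}_{3}F_{2}$'s of the statement with their stated prefactors $\sqrt\pi\,\Gamma^{2}(\tfrac14)/32$ and $\sqrt\pi\,\Gamma^{2}(\tfrac34)/48$.

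The main obstacle, and the feature absent from the conductor-$32$ case, is that the parity splitting also produces cross terms: the beta values with denominator $m! = (1)_{m}$ combine into
\[
\sqrt2\,\pi\left[ \sum_{m=0}^{\infty}\frac{(3/4)_{m}}{(4m+3)\,m!}
- \sum_{m=0}^{\infty}\frac{(1/4)_{m}}{(4m+1)\,m!} \right],
\]
which must be shown to vanish. I would recognize each sum as a Gauss value at $z=1$: via $\frac{1}{4m+1} = (\tfrac14)_{m}/(\tfrac54)_{m}$ and $\frac{1}{4m+3} = (\tfrac34)_{m}/\big(3(\tfrac74)_{m}\big)$ the two series equal ${}_{2}F_{1}[\tfrac14,\tfrac14;\tfrac54;1]$ and $\tfrac13\,{}_{2}F_{1}[\tfrac34,\tfrac34;\tfrac74;1]$. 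By Gauss's summation theorem both evaluate to $\sqrt2\,\pi/4$ (using $\Gamma(\tfrac54) = \tfrac14\Gamma(\tfrac14)$ and $\Gamma(\tfrac74) = \tfrac34\Gamma(\tfrac34)$), so the bracket is zero, the cross terms cancel, and the asserted identity follows.
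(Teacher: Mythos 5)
Your proposal is correct and follows essentially the same route as the paper: the same modular parametrization $q=e^{-y(x)}$ with Ramanujan's quartic theta evaluations (which you rederive by iterating the degree-$2$ formulas instead of citing them), the same reduction to $\frac{\pi}{32}\sum_{k\ge 0}\frac{1}{2k+1}\bigl(B(\tfrac14,\tfrac{2k+1}{4})-B(\tfrac34,\tfrac{2k+1}{4})\bigr)$, and the same term-by-term beta integration. The paper compresses the final step into ``similar calculations as in Theorem \ref{n=4nol}''; your treatment usefully makes explicit the one genuinely new point there, namely that the parity split produces cross terms with denominator $m!$ whose difference vanishes by Gauss's summation theorem, and your verification of that cancellation checks out.
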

\begin{proof}
If we set 
\begin{equation*}
q = e^{-y(x)}, 
\end{equation*} 
then we know \cite[Entry 10, 11]{ramanujan3}
\begin{align*}
\theta_{3}(q) = \sqrt{z(x)}, &\hspace{5mm} \theta_{3}(q^{4}) = \frac{1}{2}\sqrt{z(x)}(1+(1-x)^{1/4}), \\
\theta_{2}(q) = \sqrt{z(x)}x^{\frac{1}{4}}, &\hspace{5mm} \theta_{2}(q^{4}) = \frac{1}{2}\sqrt{z(x)}(1-(1-x)^{1/4}).
\end{align*}
Therefore, 
\begin{align*}
L(E_{64}, 2) 
&=\frac{\pi}{64}\int_{0}^{1}
\frac{\theta_{2}(q)}{\theta_{3}(q)}\left(1 - \frac{\theta_{2}^{2}(q)}{\theta_{3}^{2}(q)} \right)
\log \left( \frac{\theta_{2}(q^{4})}{\theta_{3}(q^{4})}\right)  \theta_{3}^{4}(q)\frac{dq}{q} \\
&=\frac{\pi}{64} \int_{0}^{1}
x^{1/4}\left(1 - x^{1/2} \right)
\log \left( \frac{1+(1-x)^{1/4}}{1-(1-x)^{1/4}} \right)  \frac{dx}{x(1-x)}.
\end{align*}
If we use the following formula
\begin{equation*}
\log \left( \frac{1+(1-x)^{1/4}}{1-(1-x)^{1/4}} \right)
= -2\sum_{n=1}^{\infty} \frac{(1-x)^{n/2}- 2(1-x)^{n/4}}{2n},
\end{equation*}
then 
\begin{align*}
L(E_{64},2) 
=-\frac{\pi}{32} \sum_{n=1}^{\infty}\frac{1}{2n}\left( B\left(\frac{1}{4},\frac{n}{2}\right) - 2B\left(\frac{1}{4},\frac{n}{4}\right)
- B\left(\frac{3}{4},\frac{n}{2} \right) + 2 B\left(\frac{3}{4}, \frac{n}{4} \right)   \right).
\end{align*}
By similar calculations as in Theorem \ref{n=4nol}, we obtain the theorem.
\end{proof}

\begin{remk}
{\rm After writing this paper, 
the author learned that the same formula was obtained independently in the unpublished notes of Rogers \cite[Theorem 5]{rog2}.}
\end{remk}

\section{Comparisons}\label{comparison}
In this section, we compare the regulators with the $L$-values for $E_{27}$, $E_{32}$ and $E_{64}$ via hypergeometric functions.

\subsection{Regulator of Curves}\label{regulator}
Here we recall the Beilinson regulator map for curves (cf.\cite{sch}). 

Let $C$ be a projective smooth curve over $\mathbb{Q}$. 
The {\it regulator map} $r_{\mathscr{D}}$ defined by Beilinson is a canonical map 
from the {\it integral part of the motivic cohomology group} $H^{2}_{\mathscr{M}}(C, \mathbb{Q}(2))_{\mathbb{Z}}$ 
to the {\it real Deligne cohomology group} $H^{2}_{\mathscr{D}}(C_{\mathbb{R}}, \mathbb{R}(2))$ (cf.\cite{sch})
\begin{equation*}
r_{\mathscr{D}}: \aki H^{2}_{\mathscr{M}}(C, \mathbb{Q}(2))_{\mathbb{Z}} \longrightarrow 
H^{2}_{\mathscr{D}}(C_{\mathbb{R}}, \mathbb{R}(2)).
\end{equation*}

We have an isomorphism (cf.\cite{jan})
\begin{equation*}
H^{2}_{\mathscr{M}}(C, \mathbb{Q}(2)) \cong 
{\rm Ker}\left( \tau \otimes \mathbb{Q} : K_{2}^{M}(\mathbb{Q}(C)) \otimes \mathbb{Q} \longrightarrow \bigoplus_{x \in C^{(1)}} 
\kappa(x)^{*} \otimes \mathbb{Q} \right).
\end{equation*}  
Here $C^{(1)}$ is the set of closed points on $C$, $\kappa(x)$ is the residue field, and $\tau = (\tau_{x})$ is the {\it tame symbol}
on $K_{2}^{M}(\mathbb{Q}(C))$
\begin{equation*}
\tau_{x}(\{f, g \}) = (-1)^{{\rm ord}_{x}f{\rm ord}_{x}g} \left(\frac{f^{{\rm ord}_{x}g}}{g^{{\rm ord}_{x}f}} \right)(x).
\end{equation*}
The {\it integral part} $H^{2}_{\mathscr{M}}(C, \mathbb{Q}(2))_{\mathbb{Z}}$ is defined to be the image of the $K$-group of 
a regular model of $C$ proper and flat over $\mathbb{Z}$.

On the other hand, we have an isomorphism (cf.\cite{e.v})
\begin{equation*}
H^{2}_{\mathscr{D}}(C_{\mathbb{R}}, \mathbb{R}(2)) \cong H^{1}(C(\mathbb{C}), \mathbb{R}(1))^{+} .
\end{equation*}
Here $+$ denotes the part fixed by the {\it de Rham conjugation} $F_{\infty} \otimes c_{\infty}$, where the {\it infinite Frobenius}
$F_{\infty}$ is the complex conjugation acting on $C(\mathbb{C})$ and $c_{\infty}$ is the complex conjugation on the coefficients.

Let $E$ be an elliptic curve.
Let $\omega_{E} \in H^{0}(E(\mathbb{C}), \Omega ^{1})^{+}$  be the real holomorphic differential form normalized so that
\begin{equation*}
\frac{1}{2\pi \sqrt{-1}} \int_{E(\mathbb{C})} \omega_{E} \wedge \overline{\omega_{E}} = -1
\end{equation*}
where $\overline{\omega_{E}} := c_{\infty}\omega_{E} = F_{\infty}\omega_{E}$. 
Note that $H^{1}(E(\mathbb{C}), \mathbb{R}(1))^{+}$ is generated by $\omega_{E} - \overline{\omega_{E}}$. 
Let $E(\mathbb{R})^{0}$ be the connected component of the origin with the orientation such that the real period
\begin{equation*}
\Omega_{\mathbb{R}} = \int_{E(\mathbb{R})^{0}} \omega_{E}
\end{equation*}
is positive. Then the  Beilinson conjectures
read that there exists an element $e \in H_{\mathscr{M}}^{2}(E, \mathbb{Q})_{\mathbb{Z}}$ such that 
\begin{equation*}
r_{\mathscr{D}}(e) = L^{\prime}(E, 0) \Omega_{\mathbb{R}}(\omega_{E} - \overline{\omega_{E}}) .
\end{equation*}

Let $X_{n}$ be the Fermat curve of degree $n$ 
\begin{equation*}
X_{n} : \aki u^{n} + v^{n} = 1.
\end{equation*}
We have a finite map $f: X_{n} \longrightarrow E_{N}$ which is defined by
\begin{align*}
\begin{aligned}
&f(u,v) = \left( \frac{3v}{1-u}, \frac{9(1+u)}{2(1-u)} \right) &\mbox{for}\aki (n, N) = (3, 27), \\
&f(u,v) = \left( \frac{2(1-v^{2})}{u^{2}}, \frac{4(1-v^{2})}{u^{3}} \right) &\mbox{for}\aki (n, N) = (4, 32), \\
&f(u,v) = \left( \frac{2(u^{2}-1)}{v^{2}}, \frac{4u(u^{2}-1)}{v^{3}} \right) &\mbox{for}\aki (n, N) = (4, 64).
\end{aligned}
\end{align*}
Let $e_{n} := \{1-u, 1-v \} \in H^{2}_{\mathscr{M}}(X_{n}, \mathbb{Q}(2))_{\mathbb{Z}}$ be Ross' element \cite{ross2}, 
and set \cite{otsubo2}
\begin{equation*}
e_{E_{N}} := f_{*}(e_{n}) \in H^{2}_{\mathscr{M}}(E_{N}, \mathbb{Q}(2))_{\mathbb{Z}} .
\end{equation*}
Otsubo \cite{otsubo2, otsubo} expressed its regulator image in terms of values of hypergeometric functions $\tilde{F}$
\begin{equation*}
\tilde{F}(\alpha, \beta) := \left(\frac{\Gamma(\alpha)\Gamma(\beta)}{\Gamma(\alpha + \beta)} \right)^{2}
{_{3}}F_{2}
\left[ 
\left.
\begin{matrix}
\alpha, \beta, \alpha + \beta -1 \\
\alpha + \beta , \alpha + \beta 
\end{matrix}
\right| 1
\right].
\end{equation*}
This is monotonically decreasing with respect to each parameter \cite[Proposition 4.25]{otsubo2}.
If we use Thomae's formula \cite[p.14, (1)]{bailey}
\begin{equation*}
{_{3}F_{2}}\left[ 
\left.
\begin{matrix}
a,b,c  \\
e,f 
\end{matrix}
\right| 1
\right] = \frac{\Gamma(e)\Gamma(f)\Gamma(s)}{\Gamma(a)\Gamma(b+s)\Gamma(c+s)} 
{_{3}F_{2}}\left[ 
\left.
\begin{matrix}
e-a, f-a , s \\
s+c, s+b 
\end{matrix}
\right| 1
\right] 
\end{equation*}
where $s:= e+f - (a+b+c)$, we have
\begin{align}
\tilde{F}(\alpha, \beta) 
= 
\frac{\Gamma(\alpha)\Gamma(\beta)}{\beta\Gamma(\alpha+\beta)}
{_{3}}F_{2}
\left[ 
\left.
\begin{matrix}
\beta, \beta, 1 \\
\alpha + \beta , \beta +1
\end{matrix}
\right| 1
\right]. \label{dixon}
\end{align}

\subsection{Conductor 27}

Otsubo proved the following formula. 
See \cite{otsubo}, Section 5.2 for the relation of $\omega_{E_{27}}$ with a form on the Fermat curve. 
\begin{theo}[{\cite[Theorem 3.2]{otsubo}}]\label{n=3noreg}
With the notations as above, we have
\begin{equation*}
r_{\mathscr{D}}(e_{E_{27}}) = -\frac{1}{6}\sqrt{\frac{\sqrt{3}}{2\pi}}
\left( \tilde{F}\left( \frac{1}{3}, \frac{1}{3} \right) - \tilde{F}\left(\frac{2}{3} , \frac{2}{3}
\right) \right) \left( \omega_{E_{27}} - \overline{\omega_{E_{27}}}\right) .
\end{equation*}
and $\tilde{F}\left( \frac{1}{3}, \frac{1}{3} \right) - \tilde{F}\left(\frac{2}{3} , \frac{2}{3}
\right) \neq 0$.
\end{theo}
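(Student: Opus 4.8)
The plan is to compute $r_{\mathscr{D}}(e_{E_{27}})$ explicitly by transporting the computation to the Fermat curve $X_{3}$, and then to read off the non-vanishing from the monotonicity of $\tilde{F}$ recalled above. Since the target $H^{2}_{\mathscr{D}}(E_{27,\mathbb{R}}, \mathbb{R}(2)) \cong H^{1}(E_{27}(\mathbb{C}), \mathbb{R}(1))^{+}$ is one-dimensional, generated by $\omega_{E_{27}} - \overline{\omega_{E_{27}}}$, the class $r_{\mathscr{D}}(e_{E_{27}})$ is automatically a scalar multiple of this generator, so the whole problem is to pin down that scalar. I would recall the explicit representative of the Beilinson regulator of a Milnor symbol on a curve: under the above isomorphism the class $r_{\mathscr{D}}(\{f,g\})$ is represented by the real $1$-form
\begin{equation*}
\eta(f,g) = \log|f|\, d\arg g - \log|g|\, d\arg f,
\end{equation*}
smooth away from the zeros and poles of $f$ and $g$, the integral-part condition guaranteeing that its class is well defined. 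The scalar is then recovered by pairing this class against $\omega_{E_{27}} - \overline{\omega_{E_{27}}}$ through the intersection pairing on $H^{1}(E_{27}(\mathbb{C}), \mathbb{R})$.

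Second, I would invoke the projection formula for the finite map $f : X_{3} \to E_{27}$ defining $e_{E_{27}} = f_{*}(e_{3})$: the pairing of $r_{\mathscr{D}}(f_{*}e_{3})$ against a class $\phi$ on $E_{27}$ equals the pairing of $r_{\mathscr{D}}(e_{3})$ against $f^{*}\phi$ on $X_{3}$. Everything thus reduces to integrating $\eta(1-u, 1-v)$, wedged with $f^{*}(\omega_{E_{27}} - \overline{\omega_{E_{27}}})$, over $X_{3}(\mathbb{C})$. Since $X_{3}$ has genus $1$, the holomorphic differential $f^{*}\omega_{E_{27}}$ is a nonzero multiple of the unique eigendifferential $\omega_{1,1} = v^{-2}\, du$, while $f^{*}\overline{\omega_{E_{27}}}$ is its antiholomorphic conjugate. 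The key device is the automorphism group $\mu_{3} \times \mu_{3}$ acting by $(u,v) \mapsto (\zeta u, \zeta' v)$: it splits $H^{1}(X_{3}(\mathbb{C}))$ into character eigenspaces, with $\omega_{1,1}$ transforming by the character $(1,1) \bmod 3$ and $\overline{\omega_{1,1}}$ by $(2,2) \bmod 3$, and $\eta(1-u,1-v)$ decomposes accordingly. By orthogonality of the intersection pairing on distinct characters, $\omega_{1,1}$ pairs only with the $(2,2)$-component of $\eta$ and $\overline{\omega_{1,1}}$ only with the $(1,1)$-component; hence pairing against $\omega_{E_{27}} - \overline{\omega_{E_{27}}}$ produces a \emph{difference} of two eigencontributions, which is the structural origin of the two terms $\tilde{F}(\tfrac13,\tfrac13)$ and $\tilde{F}(\tfrac23,\tfrac23)$ with parameters $\tfrac13 = \tfrac1n$ and $\tfrac23 = \tfrac2n$.

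Third, on each surviving eigencomponent I would restrict to the real locus and reduce the pairing to an iterated beta integral over the region $\{\,0 < u,\ 0 < v,\ u^{3} + v^{3} < 1\,\}$; expanding the relevant logarithm and integrating term by term against $u^{a-1}v^{b-3}$ yields, through the Euler-type integral representation of ${}_{3}F_{2}$ at $z=1$, precisely $\tilde{F}(\alpha,\beta)$ for $(\alpha,\beta) = (\tfrac13,\tfrac13)$ and $(\tfrac23,\tfrac23)$. Collecting the normalizations — the factor $1/(2\pi\sqrt{-1})$ fixing $\omega_{E_{27}}$, the constant produced by $f^{*}$ on differentials, and the orientation of the cycle — should collapse the prefactor to $-\tfrac16\sqrt{\sqrt{3}/(2\pi)}$. \emph{This last bookkeeping is the main obstacle}: the eigenspace structure makes the appearance of a difference of two $\tilde{F}$-values essentially forced, but it is delicate to keep every normalization consistent so as to obtain exactly the stated constant and the difference $\tilde{F}(\tfrac13,\tfrac13) - \tilde{F}(\tfrac23,\tfrac23)$, rather than a sum, a different pair of parameters, or a different scalar.

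Finally, the non-vanishing is immediate from the properties of $\tilde{F}$ recalled above: since $\tilde{F}(\alpha,\beta)$ is monotonically decreasing in each of its two arguments and $\tfrac13 < \tfrac23$ in both slots, we obtain $\tilde{F}(\tfrac13,\tfrac13) > \tilde{F}(\tfrac23,\tfrac23)$, so the difference is strictly positive and in particular nonzero.
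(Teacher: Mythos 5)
This statement is not proved in the paper at all: it is quoted verbatim from Otsubo \cite[Theorem 3.2]{otsubo}, so there is no in-paper argument to measure your proposal against. What can be said is that your outline is structurally faithful to how the cited result is actually obtained: the regulator class of a Milnor symbol is represented by $\log|f|\,d\arg g - \log|g|\,d\arg f$, the projection formula moves the pairing to $X_{3}$, and the $\mu_{3}\times\mu_{3}$ eigenspace decomposition (with the intersection pairing vanishing off complementary characters) is exactly what forces a \emph{difference} of two eigencontributions, one for the character $(1,1)$ and one for $(2,2)$. Your non-vanishing argument is complete and is precisely the one the paper makes available: $\tilde{F}$ is monotonically decreasing in each parameter by \cite[Proposition 4.25]{otsubo2}, so $\tilde{F}\left(\frac{1}{3},\frac{1}{3}\right) > \tilde{F}\left(\frac{2}{3},\frac{2}{3}\right)$.

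The genuine gap is in your third step, and you flag it yourself: the identification of each eigencontribution with $\tilde{F}\left(\frac{a}{3},\frac{a}{3}\right)$ and the determination of the prefactor $-\frac{1}{6}\sqrt{\sqrt{3}/(2\pi)}$ are asserted to ``collapse'' rather than computed. That computation --- expanding $\log|1-u|$ and $d\arg(1-v)$ into character sums, integrating term by term into beta values, resumming into the specific ${}_{3}F_{2}$ defining $\tilde{F}$, and tracking the normalization of $\omega_{E_{27}}$ by $\frac{1}{2\pi\sqrt{-1}}\int \omega\wedge\overline{\omega} = -1$ together with the degree and pullback constants of $f$ --- \emph{is} the theorem; everything before it only explains why the answer has the shape of a difference of two $\tilde{F}$-values. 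Also, your reduction ``restrict to the real locus'' over $\{0<u,\ 0<v,\ u^{3}+v^{3}<1\}$ is too quick as stated: the pairing is an integral over all of $X_{3}(\mathbb{C})$, and reducing it to a one-dimensional beta-type integral requires the full group-averaging argument, not just restriction to a real chamber. As it stands the proposal is a correct plan with the decisive quantitative step outstanding.
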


On the other hand, Rogers and Zudilin proved the following formula.
\begin{theo}[{\cite[Theorem 1]{r.z}}] \label{n=3nol}
\begin{equation*}
L(E_{27}, 2) =  
\frac{\Gamma^{3}\left(\frac{1}{3} \right)}{27} {_{3}F_{2}}\left[ 
\left.
\begin{matrix}
\frac{1}{3}, \frac{1}{3}, 1 \\
\frac{2}{3}, \frac{4}{3}
\end{matrix}
\right| 1
\right]
-
\frac{\Gamma^{3}\left(\frac{2}{3} \right)}{18} {_{3}F_{2}}\left[ 
\left.
\begin{matrix}
\frac{2}{3}, \frac{2}{3}, 1 \\
\frac{4}{3}, \frac{5}{3}
\end{matrix}
\right| 1
\right] .
\end{equation*}
\end{theo}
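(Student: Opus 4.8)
The plan is to follow the analytic method of Rogers and Zudilin, exactly parallel to the proofs of Theorems \ref{n=4nol} and \ref{cond64} but carried out in Ramanujan's cubic (signature $3$) theory in place of the classical theta setting. The weight-two newform attached to $E_{27}$ is the CM eta quotient $\eta^{2}(q^{3})\eta^{2}(q^{9})$, so by the same Mellin-transform identity underlying \eqref{e32} and \eqref{e64} one starts from
\[
L(E_{27}, 2) = -\int_{0}^{1} \eta^{2}(q^{3})\eta^{2}(q^{9})\log q\,\frac{dq}{q}.
\]

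First I would rewrite $\eta^{2}(q^{3})\eta^{2}(q^{9})$ as a product of theta-type functions adapted to the cubic theory, the analogue of Lemma \ref{etatotheta}, using Jacobi's triple product together with the cubic identities for Borwein's functions $a(q)$, $b(q)$, $c(q)$. After a rescaling and the substitution $q = e^{-2\pi u}$, I would insert the relevant Lambert-series expansions and apply Jacobi's imaginary transformation to one of the theta factors, producing a quadruple sum; a change of variable analogous to $u \mapsto ru/(k-\tfrac{1}{2})$ then separates the integrand into a product of two single sums. I would evaluate these in closed form exactly as in Lemmas \ref{seriescal1} and \ref{seriescal2}: one sum as a logarithm of a ratio of cubic theta functions, the other as a cubic theta product, the latter via Ramanujan's differential identity for the cubic Eisenstein series (the signature-$3$ replacement for the identity $3\theta_{3}^{4}(q)=4L(q^{4})-L(q)$ used there). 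This yields a theta-product integral formula for $L(E_{27},2)$ analogous to Propositions \ref{thetaprod} and \ref{lnothetadenohyouji2}.

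To finish, I would introduce the cubic hypergeometric uniformization
\[
z(x) := {}_{2}F_{1}\!\left[\left.\begin{matrix}\frac{1}{3}, \frac{2}{3} \\ 1\end{matrix}\right| x\right], \qquad y(x) := \frac{2\pi}{\sqrt{3}}\,\frac{z(1-x)}{z(x)}, \qquad q = e^{-y(x)},
\]
the signature-$3$ analogues of the Entry 10, 11 expansions of the cubic theta functions in terms of $\sqrt{z(x)}$ and powers of $x$ and $1-x$, and the corresponding uniformizing identity relating $z^{2}(x)\,\frac{dq}{q}$ to $\frac{dx}{x(1-x)}$. Substituting converts the integral into a beta-type integral $\int_{0}^{1} x^{a-1}(1-x)^{b-1}\log(\cdots)\,dx$; expanding the logarithm as a power series and integrating term by term against beta integrals with first arguments $\frac{1}{3}$ and $\frac{2}{3}$, then regrouping with Pochhammer symbols, recovers the two $_{3}F_{2}$-values at $z=1$ in the statement, after invoking $\Gamma(\tfrac{1}{3})\Gamma(\tfrac{2}{3}) = 2\pi/\sqrt{3}$.

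The step I expect to be the main obstacle is the passage to the cubic theory. The lemmas used for conductors $32$ and $64$ rest on Jacobi's classical theta identities and the signature-$2$ parametrization $z(x) = {}_{2}F_{1}[\tfrac{1}{2},\tfrac{1}{2};1;x]$; here one must instead establish and use the cubic theta identities, the correct uniformization $y(x)$ (including the constant $2\pi/\sqrt{3}$ and the precise normalization of $q$), and the cubic analogues of Lemmas \ref{seriescal1} and \ref{seriescal2}. Pinning down these cubic identities and the expansions of the cubic theta functions in $x$ is where the real work lies; once they are in place the concluding beta-integral bookkeeping is routine, just as in Theorem \ref{n=4nol}.
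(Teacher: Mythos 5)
The first thing to say is that the paper contains no proof of this statement: Theorem \ref{n=3nol} is quoted from Rogers--Zudilin \cite[Theorem 1]{r.z} and is used only as an input to Theorem \ref{comp1}, so there is no internal argument to compare yours against. Your strategy is, however, precisely the method of the cited source, which is also the method the paper itself adapts for conductors $32$ and $64$ in Section \ref{l-formulas}; as a proof plan it is the right one.

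As written, though, it is a plan rather than a proof: every substantive step is deferred to ``pinning down'' the cubic identities, and you say so yourself. For the record, the ingredients you would need all exist in the signature-$3$ theory. The analogue of Lemma \ref{etatotheta} is $\eta^{2}(q^{3})\eta^{2}(q^{9})=\tfrac{1}{3}b(q^{3})c(q^{3})$ with the Borwein functions $b(q)=\eta^{3}(q)/\eta(q^{3})$ and $c(q)=3\eta^{3}(q^{3})/\eta(q)$; the Lambert series comes from $a(q)=1+6\sum_{n\geq 1}\chi_{-3}(n)q^{n}/(1-q^{n})$ together with $c(q^{3})=\tfrac{1}{2}(a(q)-a(q^{3}))$; the Fricke involution interchanges $b$ and $c$ up to the automorphy factor, so the decoupling substitution goes through; the analogue of Lemma \ref{seriescal2} evaluates the resulting twisted divisor sum as a $b,c$-product via the weight-two Eisenstein identity on level $9$; and the parametrization is $a(q)=z(x)$, $c^{3}(q)=z^{3}x$, $b^{3}(q)=z^{3}(1-x)$ with $a^{2}(q)\,dq/q=dx/(x(1-x))$, which produces beta integrals with first parameters $\tfrac{1}{3}$ and $\tfrac{2}{3}$ and hence the coefficients $\Gamma^{3}(\tfrac{1}{3})$ and $\Gamma^{3}(\tfrac{2}{3})$ via $\Gamma(\tfrac{1}{3})\Gamma(\tfrac{2}{3})=2\pi/\sqrt{3}$. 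Note also that the two weight-one factors $b(q^{3})$ and $c(q^{3})$ are genuinely distinct (unlike the symmetric $\theta_{2}^{2}$ situation for conductor $32$), so the bookkeeping in the decoupling step is slightly asymmetric and needs care. To turn your outline into a proof you must either establish these identities or cite them (Berndt's account of Ramanujan's cubic theory, or the Borweins); alternatively, since the result is exactly \cite[Theorem 1]{r.z}, a citation is what the paper itself intends here.
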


By comparing the formulas above, we prove the Beilinson conjectures for $E_{27}$.
\begin{theo}\label{comp1}
With the notations as above, we have
\begin{equation*}
r_{\mathscr{D}}(e_{E_{27}}) = - \frac{3}{2}L^{\prime}(E_{27}, 0) \Omega_{\mathbb{R}}(\omega_{E_{27}} - \overline{\omega_{E_{27}}}).
\end{equation*}
\end{theo}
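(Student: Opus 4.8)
The plan is to put Otsubo's regulator formula (Theorem~\ref{n=3noreg}) and the Rogers--Zudilin $L$-value formula (Theorem~\ref{n=3nol}) into a common shape by means of Thomae's transformation \eqref{dixon}, to read off the resulting proportionality between $r_{\mathscr{D}}(e_{E_{27}})$ and $L(E_{27},2)$, and finally to pass to $L^{\prime}(E_{27},0)$ through the functional equation \eqref{functional}.

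First I would specialize \eqref{dixon} at $\alpha=\beta=\frac{1}{3}$ and at $\alpha=\beta=\frac{2}{3}$, obtaining
\begin{align*}
\tilde{F}\left( \frac{1}{3}, \frac{1}{3} \right)
&= \frac{3\Gamma^{2}\left( \frac{1}{3}\right)}{\Gamma\left( \frac{2}{3}\right)}
{_{3}F_{2}}\left[ \left. \begin{matrix} \frac{1}{3}, \frac{1}{3}, 1 \\ \frac{2}{3}, \frac{4}{3} \end{matrix} \right| 1 \right], \\
\tilde{F}\left( \frac{2}{3}, \frac{2}{3} \right)
&= \frac{3\Gamma^{2}\left( \frac{2}{3}\right)}{2\Gamma\left( \frac{4}{3}\right)}
{_{3}F_{2}}\left[ \left. \begin{matrix} \frac{2}{3}, \frac{2}{3}, 1 \\ \frac{4}{3}, \frac{5}{3} \end{matrix} \right| 1 \right].
\end{align*}
The decisive point is that the two ${_{3}F_{2}}$'s produced here are \emph{exactly} the two hypergeometric values occurring in Theorem~\ref{n=3nol}; this is what makes a direct comparison possible.

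Next I would substitute these into the Rogers--Zudilin formula and clean up the Gamma factors with $\Gamma\left( \frac{1}{3}\right)\Gamma\left( \frac{2}{3}\right)=\frac{2\pi}{\sqrt{3}}$ and $\Gamma\left( \frac{4}{3}\right)=\frac{1}{3}\Gamma\left( \frac{1}{3}\right)$. A short computation shows that the coefficients of $\tilde{F}\left( \frac{1}{3},\frac{1}{3}\right)$ and of $\tilde{F}\left( \frac{2}{3},\frac{2}{3}\right)$ collapse to the common value $\frac{2\pi}{81\sqrt{3}}$, whence
\begin{equation*}
\tilde{F}\left( \frac{1}{3},\frac{1}{3}\right) - \tilde{F}\left( \frac{2}{3},\frac{2}{3}\right)
= \frac{81\sqrt{3}}{2\pi}\, L(E_{27},2).
\end{equation*}
Inserting this into Theorem~\ref{n=3noreg} expresses $r_{\mathscr{D}}(e_{E_{27}})$ as an explicit numerical multiple of $L(E_{27},2)\,(\omega_{E_{27}}-\overline{\omega_{E_{27}}})$, and the non-vanishing clause of Theorem~\ref{n=3noreg} guarantees that this multiple is nonzero.

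The last step, and the one I expect to be the main obstacle, is to turn this into the stated formula. Here I would use \eqref{functional} with $N=27$ to replace $L(E_{27},2)$ by $L^{\prime}(E_{27},0)$, and then account for the real period: the numerical constant must reorganize as $-\frac{3}{2}\,\Omega_{\mathbb{R}}$ times $L^{\prime}(E_{27},0)$. For consistency this requires $\Omega_{\mathbb{R}}=\sqrt{2\pi/\sqrt{3}}$, the reciprocal of the constant $\sqrt{\sqrt{3}/(2\pi)}$ in Theorem~\ref{n=3noreg}, which reflects the normalization of $\omega_{E_{27}}$ relative to the Fermat-curve form. The genuine work is therefore to confirm this value of $\Omega_{\mathbb{R}}$ by an independent period computation and to fix the sign in \eqref{functional} via the root number of $E_{27}$ (which is $+1$, so that $L^{\prime}(E_{27},0)=\frac{27}{4\pi^{2}}L(E_{27},2)>0$); once both are in hand, collecting the powers of $3$, $\pi$ and $2$ yields precisely the coefficient $-\frac{3}{2}$.
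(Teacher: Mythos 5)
Your proposal is correct and follows essentially the same route as the paper: specialize Thomae's transformation \eqref{dixon} at $(\tfrac13,\tfrac13)$ and $(\tfrac23,\tfrac23)$ to match the two ${_3F_2}$'s in Theorem \ref{n=3nol}, deduce $\tilde F(\tfrac13,\tfrac13)-\tilde F(\tfrac23,\tfrac23)=\tfrac{81\sqrt3}{2\pi}L(E_{27},2)$, and conclude via the functional equation with root number $+1$ and the period $\Omega_{\mathbb{R}}=\sqrt{2\pi/\sqrt3}$ (which the paper simply quotes from Otsubo rather than recomputing). All your intermediate constants check against the paper's.
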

\begin{proof}
By \eqref{dixon},  we have 
\begin{align*}
\tilde{F}\left( \frac{1}{3}, \frac{1}{3} \right)
&=
\frac{3\sqrt{3}}{2\pi}\Gamma^{3}\left(\frac{1}{3}\right)
{_{3}F_{2}}\left[ 
\left.
\begin{matrix}
\frac{1}{3}, \frac{1}{3}, 1 \\
\frac{2}{3}, \frac{4}{3}
\end{matrix}
\right| 1
\right], \\
\tilde{F}\left(\frac{2}{3} , \frac{2}{3}\right)
&
=\frac{9\sqrt{3}}{4\pi}\Gamma^{3}\left(\frac{2}{3}\right)
{_{3}F_{2}}\left[ 
\left.
\begin{matrix}
\frac{2}{3}, \frac{2}{3}, 1 \\
\frac{4}{3}, \frac{5}{3}
\end{matrix}
\right| 1
\right].
\end{align*}
Hence, by Theorems \ref{n=3noreg} and \ref{n=3nol}, we have
\begin{equation*}
r_{\mathscr{D}}(e_{E_{27}})
= -\frac{27}{4\pi}\sqrt{\frac{3\sqrt{3}}{2\pi}} L(E_{27}, 2)\left(\omega_{E_{27}} - \overline{\omega_{E_{27}}} \right) .
\end{equation*}
By the functional equation \eqref{functional} and the fact that the root number (the sign of the functional equation) is $1$, we have
\begin{align*}
r_{\mathscr{D}}(e_{E_{27}})
= -\sqrt{\frac{3\pi\sqrt{3}}{2}} L^{\prime}(E_{27}, 0)\left( \omega_{E_{27}} - \overline{\omega_{E_{27}}} \right) .
\end{align*}
We know $\Omega_{\mathbb{R}} = \sqrt{\frac{2\pi}{\sqrt{3}}}$ (cf.\cite{otsubo}), hence we obtain the theorem.
\end{proof}

\subsection{Conductor 32}

The formula for $r_{\mathscr{D}}(e_{E_{32}})$ due to Otsubo is as follows 
(see \cite{otsubo}, Section 5.2 for the formula of $f^{*}\omega_{E_{32}}$).
\begin{theo}[{\cite[Theorem 3.2]{otsubo}}]\label{n=4noregfor}
Let $e_{E_{32}} \in H^{2}_{\mathscr{M}}(E_{32}, \mathbb{Q}(2))_{\mathbb{Z}}$ be the element defined in Section \ref{regulator}. 
Then we have
\begin{equation*}
r_{\mathscr{D}}(e_{E_{32}}) = -\frac{\sqrt{2}}{16\sqrt{\pi}}\left( \tilde{F}\left(\frac{1}{4}, \frac{1}{2} \right) 
- \tilde{F}\left(\frac{3}{4},\frac{1}{2}\right) \right) \left( \omega_{E_{32}} - \overline{\omega_{E_{32}}} \right)
\end{equation*}
and $\tilde{F}\left(\frac{1}{4}, \frac{1}{2} \right) - \tilde{F}\left(\frac{3}{4},\frac{1}{2}\right) \neq 0$.
\end{theo}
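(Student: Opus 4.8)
The plan is to obtain the formula from the explicit computation of the regulator of Ross' element on the Fermat curve $X_4$, using functoriality of the Beilinson regulator under the pushforward $f_*$. Since $e_{E_{32}} = f_*(e_4)$ with $e_4 = \{1-u, 1-v\}$ and the regulator commutes with proper pushforward, for any class $\omega$ on $E_{32}$ one has $\langle r_{\mathscr{D}}(e_{E_{32}}), \omega \rangle = \langle r_{\mathscr{D}}(e_4), f^*\omega\rangle$. As $H^1(E_{32}(\mathbb{C}), \mathbb{R}(1))^+$ is one-dimensional with generator $\omega_{E_{32}} - \overline{\omega_{E_{32}}}$, it suffices to pin down a single coefficient; I would extract it by cup-pairing $r_{\mathscr{D}}(e_{E_{32}})$ against a complementary class and normalizing via $\tfrac{1}{2\pi\sqrt{-1}}\int \omega_{E_{32}} \wedge \overline{\omega_{E_{32}}} = -1$.

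On $X_4$ I would first represent $r_{\mathscr{D}}(e_4)$ by the smooth Beilinson form $\eta(1-u,1-v) = \log|1-u|\,d\arg(1-v) - \log|1-v|\,d\arg(1-u)$, the standard representative of the regulator of a Milnor symbol in $H^1(X_4(\mathbb{C}), \mathbb{R}(1))$. Next I would use the automorphisms $(u,v) \mapsto (\zeta u, \zeta' v)$ with $\zeta, \zeta' \in \mu_4$ to decompose de Rham cohomology into character eigenspaces and project onto the components pairing nontrivially with $f^*\omega_{E_{32}}$. Because $f$ is built from $v^2$ and $u^2, u^3$, the pullback $f^*\omega_{E_{32}}$ sits in the eigencomponents indexed by the characters with parameters $(\tfrac14, \tfrac12)$ and $(\tfrac34, \tfrac12)$, which is precisely what yields the two terms $\tilde{F}(\tfrac14, \tfrac12)$ and $\tilde{F}(\tfrac34, \tfrac12)$.

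The crux, and the step I expect to be the main obstacle, is evaluating the resulting pairing $\int_{X_4(\mathbb{C})} \eta(1-u,1-v) \wedge f^*\omega_{E_{32}}$ in each eigencomponent. Exploiting the $\mu_4$-symmetry collapses it to a one-dimensional integral over the real arc of $X_4$; expanding $\log|1-u|$ together with the relevant fractional powers into a series and integrating term by term against the holomorphic differential produces a sum of beta integrals of the form $B(\alpha, \beta + n)$, which reassembles into a $_3F_2$ at $z = 1$ and hence into $\tilde{F}(\alpha, \tfrac12)$. The difficulty is almost entirely bookkeeping: fixing a consistent branch of $\arg$, identifying the $+$-eigenspace fixed by the de Rham conjugation $F_\infty \otimes c_\infty$, and tracking the overall constant $-\sqrt{2}/(16\sqrt{\pi})$, which packages the degree and ramification of $f$ with the normalization of $\omega_{E_{32}}$.

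Finally, for the non-vanishing I would appeal to the monotonicity of $\tilde{F}$ recorded above (\cite[Proposition 4.25]{otsubo2}): since $\alpha \mapsto \tilde{F}(\alpha, \tfrac12)$ is strictly decreasing and $\tfrac14 < \tfrac34$, we have $\tilde{F}(\tfrac14, \tfrac12) > \tilde{F}(\tfrac34, \tfrac12)$, so the difference is strictly positive and, in particular, nonzero.
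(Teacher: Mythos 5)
The paper does not actually prove this statement: it is imported verbatim from Otsubo \cite[Theorem 3.2]{otsubo}, and the only ingredient supplied in the text is the non-vanishing, which follows from the strict monotonicity of $\tilde{F}$ in each parameter exactly as you argue. Your sketch is a faithful outline of the cited source's proof (pushforward from the Fermat curve $X_4$ via the projection formula, Beilinson's form $\eta(1-u,1-v)$, the $\mu_4\times\mu_4$-eigenspace decomposition singling out the characters with parameters $(\tfrac14,\tfrac12)$ and $(\tfrac34,\tfrac12)$, and beta-integral evaluation reassembling into $\tilde{F}$), so it takes essentially the same route; just note that the step you dismiss as ``bookkeeping'' is in fact the bulk of Otsubo's computation in \cite{otsubo2}.
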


By comparing the formula above with Theorem \ref{n=4nol}, we prove the Beilinson conjectures for $E_{32}$.
\begin{theo}\label{comp2}
With notations as above, we have
\begin{equation*}
r_{\mathscr{D}}(e_{E_{32}}) = - 
\frac{1}{2} L^{\prime}(E_{32},0)\Omega_{\mathbb{R}}(\omega_{E_{32}} - \overline{\omega_{E_{32}}}).
\end{equation*}
\end{theo}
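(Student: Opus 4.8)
The plan is to follow exactly the strategy that worked for $E_{27}$ in Theorem \ref{comp1}, now using Theorem \ref{n=4noregfor} for the regulator side and Theorem \ref{n=4nol} for the $L$-value side. First I would apply the identity \eqref{dixon} to rewrite each of $\tilde{F}\left(\frac{1}{4},\frac{1}{2}\right)$ and $\tilde{F}\left(\frac{3}{4},\frac{1}{2}\right)$ as an explicit Gamma-factor times a ${_3F_2}$ at $z=1$. Taking $(\alpha,\beta)=\left(\frac14,\frac12\right)$ in \eqref{dixon} produces a ${_3F_2}$ with upper parameters $\frac12,\frac12,1$ and lower parameters $\alpha+\beta=\frac34,\ \beta+1=\frac32$; taking $(\alpha,\beta)=\left(\frac34,\frac12\right)$ produces upper parameters $\frac12,\frac12,1$ and lower parameters $\alpha+\beta=\frac54,\ \beta+1=\frac32$. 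These are precisely the two hypergeometric series appearing in Theorem \ref{n=4nol}, so the difference $\tilde{F}\left(\frac14,\frac12\right)-\tilde{F}\left(\frac34,\frac12\right)$ becomes a linear combination of the same two ${_3F_2}$-values that express $L(E_{32},2)$.

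Next I would match the Gamma-coefficients. Substituting these expressions into Theorem \ref{n=4noregfor} and comparing with Theorem \ref{n=4nol}, the goal is to show that $\tilde{F}\left(\frac14,\frac12\right)-\tilde{F}\left(\frac34,\frac12\right)$ equals an explicit rational-times-$\pi$-power multiple of $L(E_{32},2)$. The arithmetic here uses the reflection-type identities $\Gamma\left(\frac14\right)\Gamma\left(\frac34\right)=\sqrt2\,\pi$ and $\Gamma\left(\frac12\right)=\sqrt\pi$ (already invoked in the proof of Theorem \ref{n=4nol}) to reconcile the two normalizations. I expect the coefficient of each of the two ${_3F_2}$-values to match up after this simplification, yielding a clean relation
\begin{equation*}
r_{\mathscr{D}}(e_{E_{32}}) = c\cdot L(E_{32},2)\left(\omega_{E_{32}}-\overline{\omega_{E_{32}}}\right)
\end{equation*}
for an explicit constant $c$ involving $\pi$ and square roots only.

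Finally I would convert $L(E_{32},2)$ into $L'(E_{32},0)$ via the functional equation \eqref{functional}, using that the root number of $E_{32}$ is $+1$ so that the sign there is determined, and then divide by the real period $\Omega_{\mathbb{R}}$. This requires knowing the value of $\Omega_{\mathbb{R}}$ for $E_{32}$ explicitly, in analogy with the value $\Omega_{\mathbb{R}}=\sqrt{2\pi/\sqrt3}$ quoted from \cite{otsubo} for $E_{27}$; I would take the corresponding value for $E_{32}$ from \cite{otsubo} and substitute. The main obstacle, and the only place real care is needed, is the bookkeeping of the $\pi$-powers and $\Gamma$-factors through \eqref{dixon}, the functional equation, and the division by $\Omega_{\mathbb{R}}$: all the transcendental and algebraic factors must cancel to leave exactly the rational number $-\frac12$. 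Since the non-vanishing $\tilde{F}\left(\frac14,\frac12\right)-\tilde{F}\left(\frac34,\frac12\right)\neq0$ is already guaranteed by Theorem \ref{n=4noregfor}, no separate nondegeneracy argument is needed, and the result follows by direct comparison.
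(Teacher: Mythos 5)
Your proposal is correct and follows essentially the same route as the paper: apply \eqref{dixon} with $(\alpha,\beta)=(\tfrac14,\tfrac12)$ and $(\tfrac34,\tfrac12)$ to match the two ${_3F_2}$-values in Theorem \ref{n=4nol}, deduce $r_{\mathscr{D}}(e_{E_{32}})=-\frac{4\sqrt{2}}{\pi\sqrt{\pi}}L(E_{32},2)(\omega_{E_{32}}-\overline{\omega_{E_{32}}})$, and then convert via the functional equation (root number $+1$) and the value $\Omega_{\mathbb{R}}=\sqrt{2\pi}$ from \cite{otsubo}. The only detail you leave implicit is that explicit period value, which the paper likewise simply quotes.
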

\begin{proof}
By \eqref{dixon}, we have 
\begin{align*}
\tilde{F}\left( \frac{1}{4}, \frac{1}{2} \right)
&=
\sqrt{\frac{2}{\pi}}\Gamma^{2}\left(\frac{1}{4}\right)
{_{3}F_{2}}\left[ 
\left.
\begin{matrix}
\frac{1}{2}, \frac{1}{2}, 1 \\
\frac{3}{2}, \frac{3}{4}
\end{matrix}
\right| 1
\right], \\
\tilde{F}\left(\frac{3}{4} , \frac{1}{2}\right)
&
=4\sqrt{\frac{2}{\pi}}\Gamma^{2}\left(\frac{3}{4}\right)
{_{3}F_{2}}\left[ 
\left.
\begin{matrix}
\frac{1}{2}, \frac{1}{2}, 1 \\
\frac{3}{2}, \frac{5}{4}
\end{matrix}
\right| 1
\right].
\end{align*}
Hence, by Theorems \ref{n=4nol} and \ref{n=4noregfor}, we have
\begin{equation*}
r_{\mathscr{D}}(e_{E_{32}})
= -\frac{4\sqrt{2}}{\pi\sqrt{\pi}} L(E_{32}, 2)\left(\omega_{E_{32}} - \overline{\omega_{E_{32}}} \right) .
\end{equation*}
By the functional equation \eqref{functional} and the fact that the root number is $1$, we have
\begin{align*}
r_{\mathscr{D}}(e_{E_{32}})
= -\frac{\sqrt{2\pi}}{2} L^{\prime}(E_{32}, 0)\left( \omega_{E_{32}} - \overline{\omega_{E_{32}}} \right) .
\end{align*}
We know $\Omega_{\mathbb{R}} = \sqrt{2\pi}$ (cf.\cite{otsubo}), hence we obtain the theorem.
\end{proof}

\subsection{Conductor 64}

The formula for $r_{\mathscr{D}}(e_{E_{64}})$ due to Otsubo is as follows. 
It is not difficult to see that $f^{*}\omega_{E_{64}}$ is proportional to $\widetilde{\omega}_{4}^{1,1}$ 
(see \cite[Section 3.2]{otsubo} for the notation). Then similarly as in loc. cit., we obtain 
$f^{*}\omega_{E_{64}} = \frac{\sqrt{\pi}}{2}\widetilde{\omega}_{4}^{1,1}$.
\begin{theo}[{\cite[Theorem 3.2]{otsubo}}]\label{n=4noregfor2}
Let $e_{E_{64}} \in H^{2}_{\mathscr{M}}(E_{64}, \mathbb{Q}(2))_{\mathbb{Z}}$ be the element defined in Section \ref{regulator}. 
Then we have
\begin{equation*}
r_{\mathscr{D}}(e_{E_{64}}) = -\frac{1}{16\sqrt{\pi}}\left( \tilde{F}\left(\frac{1}{4}, \frac{1}{4} \right) 
- \tilde{F}\left(\frac{3}{4},\frac{3}{4}\right) \right) \left( \omega_{E_{64}} - \overline{\omega_{E_{64}}} \right)
\end{equation*}
and $\tilde{F}\left(\frac{1}{4}, \frac{1}{4} \right) - \tilde{F}\left(\frac{3}{4},\frac{3}{4}\right) \neq 0$. 
\end{theo}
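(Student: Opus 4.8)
The plan is to reduce the computation to Otsubo's general regulator formula on the Fermat curve $X_{4}$ via the pushforward $f_{*}$, and then to pin down the single normalizing constant by identifying the pullback of $\omega_{E_{64}}$ with a Fermat differential form. Since $H^{1}(E_{64}(\mathbb{C}), \mathbb{R}(1))^{+}$ is one-dimensional, generated by $\omega_{E_{64}} - \overline{\omega_{E_{64}}}$, it suffices to determine a single real coefficient, which I would extract by pairing the regulator class against the holomorphic form.

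First I would invoke the projection formula for the Beilinson regulator with respect to the finite map $f : X_{4} \longrightarrow E_{64}$. Because $e_{E_{64}} = f_{*}(e_{4})$ with $e_{4} = \{1-u, 1-v\}$ Ross's element, the compatibility $r_{\mathscr{D}} \circ f_{*} = f_{*} \circ r_{\mathscr{D}}$ together with adjunction shows that the pairing of $r_{\mathscr{D}}(e_{E_{64}})$ against $\omega_{E_{64}}$ equals the pairing of $r_{\mathscr{D}}(e_{4})$ against $f^{*}\omega_{E_{64}}$. This transports the problem to the Fermat curve, where \cite[Theorem 3.2]{otsubo} already expresses the regulator of $e_{4}$, paired against the basis forms $\widetilde{\omega}_{4}^{a,b}$, in terms of the functions $\tilde{F}(a/4, b/4)$.

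Next I would carry out the key step: computing $f^{*}\omega_{E_{64}}$ explicitly. Pulling the normalized holomorphic form on $E_{64}$ back through $f(u,v) = (2(u^{2}-1)/v^{2}, 4u(u^{2}-1)/v^{3})$ and simplifying on $u^{4}+v^{4}=1$, I expect to find, as asserted in the excerpt, that $f^{*}\omega_{E_{64}}$ is a scalar multiple of the single Fermat form $\widetilde{\omega}_{4}^{1,1}$. The scalar is then fixed by imposing the normalization $\frac{1}{2\pi\sqrt{-1}}\int_{E_{64}(\mathbb{C})}\omega_{E_{64}} \wedge \overline{\omega_{E_{64}}} = -1$, which reduces to a period computation in terms of beta and gamma values and yields $f^{*}\omega_{E_{64}} = \frac{\sqrt{\pi}}{2}\widetilde{\omega}_{4}^{1,1}$. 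I expect this normalization to be the main obstacle, since it demands careful bookkeeping of the Fermat forms together with the degree and ramification of $f$ so that the constant emerges exactly; this is precisely the point that is new relative to the cases $E_{27}$ and $E_{32}$ treated by Otsubo.

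Finally, substituting the $(a,b)=(1,1)$ component of the Fermat regulator formula and projecting onto the $F_{\infty} \otimes c_{\infty}$-invariant part produces the antisymmetric combination: the de Rham conjugation carries the $(1,1)$-eigencomponent to the $(3,3)$-one, so the surviving contribution is the difference $\tilde{F}(1/4, 1/4) - \tilde{F}(3/4, 3/4)$, and collecting the constant $\frac{\sqrt{\pi}}{2}$ together with the constants coming from \cite[Theorem 3.2]{otsubo} gives the coefficient $-\frac{1}{16\sqrt{\pi}}$. For the nonvanishing, I would use the stated monotonicity of $\tilde{F}$ in each parameter: since $\frac{1}{4} < \frac{3}{4}$, strict monotonic decrease gives $\tilde{F}(1/4, 1/4) > \tilde{F}(3/4, 3/4)$, so the difference is strictly positive and in particular nonzero.
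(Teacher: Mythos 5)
Your proposal is correct and follows essentially the same route as the paper, which proves this statement simply by citing Otsubo's general Fermat-curve regulator formula and supplying the one new ingredient, namely the identification $f^{*}\omega_{E_{64}} = \frac{\sqrt{\pi}}{2}\widetilde{\omega}_{4}^{1,1}$, exactly the step you single out as the crux. Your nonvanishing argument via the strict monotonicity of $\tilde{F}$ in each parameter is also the intended one.
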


By comparing the formula above with Theorem \ref{cond64}, we prove the Beilinson conjectures for $E_{64}$.
\begin{theo}\label{comp3}
With notations as above, we have
\begin{equation*}
r_{\mathscr{D}}(e_{E_{64}}) = - 
\frac{1}{2} L^{\prime}(E_{64},0)\Omega_{\mathbb{R}}(\omega_{E_{64}} - \overline{\omega_{E_{64}}}).
\end{equation*}
\end{theo}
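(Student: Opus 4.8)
The plan is to follow verbatim the strategy used for Theorems \ref{comp1} and \ref{comp2}: reduce the statement to a direct comparison between Otsubo's regulator formula (Theorem \ref{n=4noregfor2}) and the $L$-value formula (Theorem \ref{cond64}), both of which are already expressed through the same two generalized hypergeometric series at $z = 1$. First I would apply the reduction formula \eqref{dixon} to each of the two terms $\tilde{F}(1/4, 1/4)$ and $\tilde{F}(3/4, 3/4)$ appearing in Theorem \ref{n=4noregfor2}. With $(\alpha, \beta) = (1/4, 1/4)$ we have $\alpha + \beta = 1/2$ and $\beta + 1 = 5/4$, and using $\Gamma(1/2) = \sqrt{\pi}$ this should give
\begin{equation*}
\tilde{F}\left(\frac{1}{4}, \frac{1}{4}\right) = \frac{4\Gamma^{2}\left(\frac{1}{4}\right)}{\sqrt{\pi}}\,
{_{3}F_{2}}\left[\left.\begin{matrix}\frac{1}{4}, \frac{1}{4}, 1 \\ \frac{1}{2}, \frac{5}{4}\end{matrix}\right| 1\right];
\end{equation*}
likewise, with $(\alpha, \beta) = (3/4, 3/4)$ and $\Gamma(3/2) = \sqrt{\pi}/2$, the term $\tilde{F}(3/4, 3/4)$ becomes $8\Gamma^{2}(3/4)/(3\sqrt{\pi})$ times the second series in Theorem \ref{cond64}.

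Since these are exactly the two hypergeometric values occurring in the $L$-value formula, matching the coefficients of the two series should collapse Otsubo's linear combination into a single multiple of $L(E_{64}, 2)$, namely
\begin{equation*}
r_{\mathscr{D}}(e_{E_{64}}) = -\frac{8}{\pi\sqrt{\pi}}\, L(E_{64}, 2)\left(\omega_{E_{64}} - \overline{\omega_{E_{64}}}\right).
\end{equation*}
I would then pass to $L^{\prime}(E_{64}, 0)$ via the functional equation \eqref{functional} with $N = 64$ and root number $1$, which converts the coefficient $-8/(\pi\sqrt{\pi})$ into $-\sqrt{\pi}/2$, giving $r_{\mathscr{D}}(e_{E_{64}}) = -\frac{\sqrt{\pi}}{2}L^{\prime}(E_{64}, 0)(\omega_{E_{64}} - \overline{\omega_{E_{64}}})$. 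Comparing with the target $-\frac{1}{2}L^{\prime}(E_{64}, 0)\Omega_{\mathbb{R}}(\omega_{E_{64}} - \overline{\omega_{E_{64}}})$ then forces the identification $\Omega_{\mathbb{R}} = \sqrt{\pi}$, completing the argument.

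The hypergeometric bookkeeping is entirely parallel to the conductor-$27$ and conductor-$32$ cases and should present no difficulty. The genuinely new point --- and the step I expect to be the main obstacle --- is the determination of the real period $\Omega_{\mathbb{R}} = \sqrt{\pi}$ for the normalized differential $\omega_{E_{64}}$, together with the verification that the root number of $E_{64}$ equals $1$. Since $E_{64}$ is exactly the case Otsubo did not treat, I would compute $\Omega_{\mathbb{R}}$ directly from the normalization $\frac{1}{2\pi\sqrt{-1}}\int_{E_{64}(\mathbb{C})}\omega_{E_{64}} \wedge \overline{\omega_{E_{64}}} = -1$, rather than quoting it from the literature as in the earlier cases; this is where the proportionality constant $\frac{\sqrt{\pi}}{2}$ relating $f^{*}\omega_{E_{64}}$ to $\widetilde{\omega}_{4}^{1,1}$, recorded just before Theorem \ref{n=4noregfor2}, should enter.
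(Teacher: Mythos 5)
Your proposal is correct and follows essentially the same route as the paper's proof: apply \eqref{dixon} to $\tilde{F}(1/4,1/4)$ and $\tilde{F}(3/4,3/4)$, match the resulting coefficients against Theorem \ref{cond64} to get $r_{\mathscr{D}}(e_{E_{64}}) = -\frac{8}{\pi\sqrt{\pi}}L(E_{64},2)(\omega_{E_{64}}-\overline{\omega_{E_{64}}})$, and convert via the functional equation with root number $1$ and $\Omega_{\mathbb{R}}=\sqrt{\pi}$. The only cosmetic difference is that the paper simply quotes $\Omega_{\mathbb{R}}=\sqrt{\pi}$ from Otsubo and cites Koblitz for the root number, whereas you propose to recompute the period from the normalization; both are fine.
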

\begin{proof}
By \eqref{dixon}, we have 
\begin{align*}
\tilde{F}\left( \frac{1}{4}, \frac{1}{4} \right)
&=
\frac{4\Gamma^{2}\left(\frac{1}{4}\right)}{\sqrt{\pi}}
{_{3}F_{2}}\left[ 
\left.
\begin{matrix}
\frac{1}{4}, \frac{1}{4}, 1 \\
\frac{1}{2}, \frac{5}{4}
\end{matrix}
\right| 1
\right], \\
\tilde{F}\left(\frac{3}{4} , \frac{3}{4}\right)
&
=\frac{8\Gamma^{2}\left(\frac{3}{4}\right)}{3\sqrt{\pi}}
{_{3}F_{2}}\left[ 
\left.
\begin{matrix}
\frac{3}{4}, \frac{3}{4}, 1 \\
\frac{3}{2}, \frac{7}{4}
\end{matrix}
\right| 1
\right].
\end{align*}
Hence, by Theorems \ref{cond64} and \ref{n=4noregfor2}, we have
\begin{equation*}
r_{\mathscr{D}}(e_{E_{64}})
= -\frac{8}{\pi\sqrt{\pi}} L(E_{64}, 2)\left(\omega_{E_{64}} - \overline{\omega_{E_{64}}} \right) .
\end{equation*}
By the functional equation \eqref{functional} and the fact that 
the root number is $1$ (cf.\cite[p.84, Theorem]{kob}), we have
\begin{align*}
r_{\mathscr{D}}(e_{E_{64}})
= -\frac{\sqrt{\pi}}{2} L^{\prime}(E_{64}, 0)\left( \omega_{E_{64}} - \overline{\omega_{E_{64}}} \right) .
\end{align*}
We know $\Omega_{\mathbb{R}} = \sqrt{\pi}$ (cf.\cite{otsubo}), hence we obtain the theorem.
\end{proof}

\section*{Acknowledgment}
This paper is based on the author's master's thesis at Chiba University.
I am very grateful to Noriyuki Otsubo and Shigeki Matsuda for valuable advises.
I would like to thank Mathew Rogers and Wadim Zudilin for helpful comments.

\end{document}